\newtheorem{theorem}{Theorem}%[section]
\newtheorem{lemma}[theorem]{Lemma}
\newtheorem{proposition}[theorem]{Proposition}
\newtheorem{examp}{Example}
\newtheorem{corollary}[theorem]{Corollary}
\newtheorem{remar}[theorem]{Remark}
\newtheorem{ex}{Exercise}%[chapter]
\newtheorem{prob}{Open Problem}%[chapter]
\newcommand{\diams}{\unskip\nobreak\hfil\penalty50%
\hskip1em\hbox{}\nobreak\hfil%
$\diamondsuit$\parfillskip=0pt\finalhyphendemerits=0}
\newcommand{\bfind}[1]{\index{#1}{\bf #1}}
\newcommand{\n}{\par\noindent}
\newcommand{\sn}{\par\smallskip\noindent}
\newcommand{\mn}{\par\medskip\noindent}
\newcommand{\bn}{\par\bigskip\noindent}
\newcommand{\pars}{\par\smallskip}
\newcommand{\parm}{\par\medskip}
\newcommand{\sep}{^{\rm sep}}
\newcommand{\chara}{\mbox{\rm char}\,}
\newcommand{\trdeg}{\mbox{\rm trdeg}\,}
\newcommand{\appr}{\mbox{\rm appr}\,}
\newcommand{\dist}{\mbox{\rm dist}\,}
\newcommand{\ndd}{\mbox{\rm ndd}\,}
\newcommand{\cal}{\mathcal}
\newcommand{\subsetuneq}{\mathrel{\raisebox{.8ex}{\footnotesize%
$\displaystyle\mathop{\subset}_{\not=}$}}}
\newcommand{\modd}{\mbox{\rm mod }\,}
\newcommand{\N}{\mathbb{N}}
\begin{document}
\title[Number of distinct distances]{Counting the number of distinct distances of elements in valued field extensions}
%\\[.2cm]
%\small\it --- preliminary version ---}
%
\author{Anna Blaszczok and Franz-Viktor Kuhlmann}%\\[.4cm]
\address{Institute of Mathematics, University of Silesia in Katowice, Bankowa 14,
40-007 Katowice, Poland}
\email{anna.blaszczok@us.edu.pl, fvk@math.us.edu.pl}
%\thanks{}
%
\date{24.\ 4.\ 2017}
%\subjclass{Primary 12J20, Secondary 12J10, 12F05}

%
\begin{abstract}
%\noindent {\footnotesize\rm }
The defect of valued field extensions is a major obstacle in open problems in resolution of singularities and in
the model theory of valued fields, whenever positive characteristic is involved. We continue the detailed study of
defect extensions through the tool of distances, which measure how well an element in an immediate extension can be
approximated by elements from the base field. We show that in several situations the number of essentially distinct
distances in fixed extensions, or even just over a fixed base field, is finite, and we compute upper bounds. We
apply this to the special case of valued functions fields over perfect base fields. This provides important
information used in forthcoming research on relative resolution problems.
\end{abstract}
\maketitle
%
%
%ÄÄÄÄÄÄÄÄÄÄÄÄÄÄÄÄÄÄÄÄÄÄÄÄÄÄÄÄÄÄÄÄÄÄÄÄÄÄÄÄÄÄÄÄÄÄÄÄÄÄÄÄÄÄÄÄÄÄÄÄÄÄÄÄÄÄÄÄÄÄÄ
%
\section{Introduction}
By $(L|K,v)$ we denote a field extension $L|K$ where $v$ is a valuation
on $L$ and $K$ is endowed with the restriction of $v$. The valuation
ring of $v$ on $L$ will be denoted by ${\cal O}_L\,$, and that on $K$ by
${\cal O}_K\,$. The value group of $(L,v)$ will be denoted by $vL$, and
its residue field by $Lv$. The value of an element $a$ will be denoted
by $va$, and its residue by $av$.

The \bfind{defect}, also known as \bfind{ramification deficiency}, of finite extensions $(L|K,v)$ of valued fields
is a phenomenon that only appears when the residue field $Kv$ has positive characteristic. It is a main obstacle to
the solution of deep open problems in positive characteristic, such as:
\sn
$\bullet$ \ local uniformization (the local form of resolution of singularities), which is not known for arbitrary
dimension in positive characteristic,
\sn
$\bullet$ \  the model theory of valued fields, in particular the open question whether Laurent series fields over
finite fields have a decidable theory.
\sn
Both problems are linked through the structure theory of valued function fields, in which it is essential to tame
the defect, as well as wild ramification, cf.\ \cite{[Ku2],[Ku2a],[Ku5],[Ku6]}. While implicitly known through the
work of algebraic geometers and model theorists since the 1950s, the connection of the defect with the
problem of local uniformization and the model theory of valued fields with positive residue characteristic has been
pointed out in detail in the cited works of the second author. Defects also appear in crucial examples, as in the
paper \cite{[C--P]}.

Using tools of ramification theory, the study of extensions of valued fields of residue characteristic $p>0$
with nontrivial defect can be reduced to the study of normal extensions of degree $p$ with nontrivial defect. Such
extensions are immediate. An arbitrary extension $(L|K,v)$ of valued fields is \bfind{immediate} if the canonical
embeddings of $vK$ in $vL$ and of $Kv$ in $Lv$ are onto. As a consequence, for every $a\in L\setminus K$ the set
\[
v(a-K) \>:=\> \{v(a-c)\mid c\in K\}
\]
does not have a maximal element; this follows from \cite[Theorem 1]{[Ka]}. If $a$ is an element of any valued field
extension of $(K,v)$ such that $v(a-K)$ has no maximal element, then this set is an initial segment of $vK$. We
associate with it a cut
in the divisible hull $\widetilde{vK}$ of $vK$ by taking as the lower cut set the smallest initial segment in
$\widetilde{vK}$ which contains $v(a-K)$. This cut is called the \bfind{distance of $a$ over $K$} and denoted by
$\dist(a,K)$. For more details, see Section~\ref{sectdist}.

Distances can be used to classify defect extensions. If an extension $L|K$ of degree $p$ is Galois
and the field $K$ is itself of characteristic $p$, then $L|K$ is an \bfind{Artin--Schreier extension}, that is, $L$
is generated over $K$ by an element $\vartheta$ such that
\begin{equation}                            \label{ASg}
\vartheta^p-\vartheta\;\in\; K\>;
\end{equation}
we call $\vartheta$ an \bfind{Artin-Schreier generator} of the
extension. If such an extension of a valued field $(K,v)$ has nontrivial
defect, then the extension of the valuation $v$ from $K$ to $L$ is unique and $(L|K,v)$ is immediate (see
Lemma~\ref{Gepd} below); we call it an \bfind{Artin--Schreier defect extension}. A classification of
Artin--Schreier defect extensions is introduced in \cite{[Ku3]}, and it is shown that the classification can be read
off from the distance $\dist(\vartheta,K)$ of the Artin-Schreier generator. In a collaboration of the second author
with O.~Piltant (\cite{[KuP]}) the question arose how many distinct distances of generators of Artin-Schreier defect
extensions exist over a fixed $(K,v)$ (in particular, whether this number is finite at all).

If $c\in K$, then $v(ca-K)=\{vc+v(a-c)\mid c\in K\}=:vc+v(a-K)$, which means that the cut $\dist(a,K)$ is just
shifted by adding $vc$ to all elements of the lower cut set; we then write
\begin{equation}                              \label{shiftcut}
\dist(ca,K)\>=\>vc+\dist(a,K)\>.
\end{equation}
We do not regard $\dist(a,K)$ and $\dist(ca,K)$ as essentially distinct, so we will actually ask for the number of
distances that are distinct modulo $vK$. In Section~\ref{sectdistAS} we give an answer under certain finiteness
assumptions, see Theorem~\ref{r+m}. These conditions hold for instance in the following situation:

\begin{theorem}                       \label{MT1}
Take a valued function field $(K|K_0,v)$ over a perfect trivially valued
base field $(K_0,v)$. Then the number of distinct distances of elements in
Artin-Schreier defect extensions modulo $vK$ is bounded by $2\cdot\trdeg K|K_0\,$.
\end{theorem}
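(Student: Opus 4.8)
The plan hinges on the general result Theorem~\ref{r+m}, which the excerpt promises will bound the number of essentially distinct distances under certain finiteness assumptions expressed in terms of some rank-type invariants $r$ and $m$ attached to $(K,v)$. So the task reduces to two things: first, understanding which invariants the bound $r+m$ (or whatever the precise shape of Theorem~\ref{r+m} turns out to be) depends on, and second, showing that for a valued function field $(K|K_0,v)$ with $K_0$ perfect and trivially valued, these invariants are controlled by $\trdeg K|K_0$, ultimately yielding the clean bound $2\cdot\trdeg K|K_0$. I would therefore begin by stating the invariants appearing in Theorem~\ref{r+m} and recording that it suffices to bound each of them by $\trdeg K|K_0$.

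The core algebraic input will be the Abhyankar-type inequality for valued function fields: for a function field $(K|K_0,v)$ one has $\trdeg K|K_0 \geq \trdeg Kv|K_0v + \dim_\Q \Q\otimes_\Z (vK/vK_0)$, and since $(K_0,v)$ is trivially valued we have $vK_0=\{0\}$ and $K_0v=K_0$. Thus $\trdeg K|K_0$ simultaneously bounds the rational rank of $vK$ and the transcendence degree of the residue field extension $Kv|K_0$. I expect the two summands giving the factor of $2$ to correspond precisely to these two sources of complexity — one contribution coming from the structure of the value group $vK$ (its rational rank, or the number of essentially distinct archimedean/convex components one can separate), and the other from the residue field $Kv$, where perfectness of $K_0$ guarantees $Kv$ is separably generated (indeed a regular, in fact perfect-enough, extension) so that $p$-th powers do not secretly increase the relevant dimension. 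The plan is to feed these two bounds, each at most $\trdeg K|K_0$, into Theorem~\ref{r+m} to obtain the sum $2\cdot\trdeg K|K_0$.

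Concretely, the key steps in order are: (1) identify from Theorem~\ref{r+m} the precise finiteness quantities — call them a value-group quantity and a residue-field quantity — whose sum bounds the number of distances modulo $vK$; (2) verify the finiteness hypotheses of Theorem~\ref{r+m} hold in the function-field setting, which is where the perfectness of $K_0$ is essential, since in characteristic $p$ one must rule out infinite towers $K^{1/p}, K^{1/p^2},\dots$ contributing new distances, and perfectness of $K_0$ together with finite transcendence degree forces the relevant $p$-divisible or imperfection data to be finite; (3) bound the value-group quantity by the rational rank of $vK$, hence by $\trdeg K|K_0$ via Abhyankar; (4) bound the residue-field quantity by $\trdeg Kv|K_0$, hence again by $\trdeg K|K_0$; and (5) add the two bounds.

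The main obstacle, I expect, is step (2): controlling the interaction between defect, the value group, and residue field so that the abstract hypotheses of Theorem~\ref{r+m} are genuinely met, and in particular making rigorous the claim that perfectness of the base field prevents the imperfection of $(K,v)$ from contributing unboundedly many new distances. The Abhyankar inequality and the trivial valuation on $K_0$ make the arithmetic of steps (3)–(4) essentially routine once the invariants are correctly matched to $\trdeg K|K_0$; the real content is aligning the general Theorem~\ref{r+m} with the geometric finiteness coming from the function-field hypothesis and ensuring the factor $2$ is not merely an artifact but reflects the genuine split between value-group and residue-field contributions.
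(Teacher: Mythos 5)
Your overall reduction --- invoke Theorem~\ref{r+m} and bound its invariants by $\trdeg K|K_0$ --- is exactly the paper's route, and your step (3) (the value-group invariant, i.e.\ the rank of $vK$, bounded via rational rank and Abhyankar by $\trdeg K|K_0$, using triviality of $v$ on $K_0$) is correct. But you have misidentified the second invariant, and this breaks your steps (1) and (4) as well as your explanation of the factor $2$. The quantity $m$ in Theorem~\ref{r+m} is not a residue-field quantity: it is defined by $p^m=[K':K]$, where $K'$ is a maximal immediate subextension of $(K^{1/p}|K,v)$, and it is bounded by the $p$-degree $k$ of $K$ itself, $[K:K^p]=p^k$; it controls the distances coming from purely inseparable defect extensions of degree $p$, hence from dependent Artin--Schreier defect extensions. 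The paper's proof of Theorem~\ref{MT1} then uses the field-theoretic fact that for a function field over a \emph{perfect} base field, $k=\trdeg K|K_0$ exactly --- this is where perfectness enters, not through separable generation of $Kv$. Your step (4), bounding ``the residue-field quantity'' by $\trdeg Kv|K_0$, bounds something that does not occur in Theorem~\ref{r+m}; and the $p$-degree of $K$ is in general not bounded by $\trdeg Kv|K_0$: for a rational rank $2$ valuation on $K=K_0(x,y)$, trivial on $K_0$, the residue field $Kv$ is algebraic over $K_0$ while $k=2$.

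Moreover, your heuristic for the factor $2$ is inconsistent with the very Abhyankar inequality you cite: if the two summands really were the rational rank of $vK$ and $\trdeg Kv|K_0$, then Abhyankar would bound their \emph{sum} by $\trdeg K|K_0$, and the theorem would assert the bound $\trdeg K|K_0$, not $2\cdot\trdeg K|K_0$. The factor $2$ arises precisely because the two invariants are the rank $r$ and the $p$-degree $k$, each of which is \emph{separately} bounded by (respectively, equal to) the full transcendence degree; they are not the two complementary pieces of the Abhyankar decomposition. Once you replace your step (4) by the equality $k=\trdeg K|K_0$ for function fields over perfect base fields --- which also delivers the finiteness hypotheses of Theorem~\ref{r+m}, since finite $p$-degree makes $K^{1/p}|K$ and hence $K'|K$ finite --- the proof closes exactly as in the paper.
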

This answers a question from Olivier Piltant; results of this type are a crucial tool in \cite{[KuP]}.

\pars
More generally, we would like to count all the essentially distinct distances over $K$ of all elements
$a\in\tilde{K}$ for which $v(a-K)$ has no maximal element. But it seems unlikely that we will get a finite number
if we allow the elements $a$ to attain arbitrarily large degree over $K$, so we need again some conditions.
The first way to impose suitable conditions is to restrict the scope to all elements $a\in L$ where $L|K$ is a
finite extension such that the extension of $v$ from $K$ to $L$ is unique. For this case, we obtain in
Section~\ref{sectdistLK} an upper bound in terms of the defect of the
extension $(L|K,v)$ and its ramification index $(vL:vK)$, see Theorem~\ref{nonhens}.

Another approach is to limit the scope to all $a\in\tilde{K}$ of bounded degree over $K$. It is an open problem
whether the number of essentially distinct distances in this case is always finite and
to compute an upper bound for it, even under the finiteness conditions of
Theorem~\ref{r+m}. However, we are able to show that under
these finiteness conditions, the number of distances that are distinct modulo
$\widetilde{vK}$ is always finite; we give an upper bound in Theorem~\ref{ndd_i}.

Note that there are examples of valued fields of rank 1, but infinite $p$-degree, where even the number of
distances of elements in immediate purely inseparable extensions of degree $p$ (and of elements in Artin-Schreier
defect extensions) that are distinct modulo $\widetilde{vK}$ is infinite.
%, see Section~\ref{sectdistAS})

\bn
%
%ÄÄÄÄÄÄÄÄÄÄÄÄÄÄÄÄÄÄÄÄÄÄÄÄÄÄÄÄÄÄÄÄÄÄÄÄÄÄÄÄÄÄÄÄÄÄÄÄÄÄÄÄÄÄÄÄÄÄÄÄÄÄÄÄÄÄÄÄÄÄÄ
%
\section{Preliminaries}
For general facts from valuation theory, we refer the reader to \cite{[En],[E--P],[Ri],[ZS2]}.
%

%
%ÄÄÄÄÄÄÄÄÄÄÄÄÄÄÄÄÄÄÄÄÄÄÄÄÄÄÄÄÄÄÄÄÄÄÄÄÄÄÄÄÄÄÄÄÄÄÄÄÄÄÄÄÄÄÄÄÄÄÄÄÄÄÄÄÄÄÄÄÄÄÄ
%
\subsection{Defect}   \label{sectied}
Take a finite normal extension $L|K$ and a valuation $v$ on $K$. Then
$v$ has finitely many distinct extensions $v_1,\ldots,v_g$ to $L$. All
of them have the same ramification index $(v_iL:vK)$, which we will
denote by $e$, and all of them have the same inertia degree $[Lv_i:Kv]$,
which we will denote by $f$. Then we have the fundamental equality
\begin{equation}                   \label{feq}
[L:K]\>=\> d\cdot e\cdot f\cdot g\>,
\end{equation}
where by the Lemma of Ostrowski (cf.\ \cite[Th\'eor\`eme 2, p.~236]{[Ri]}) or \cite[Corollary to Theorem~25,
Section G, p.~78]{[ZS2]}), $d$ is a power $\geq 1$ of the residue
characteristic $\chara Kv$ if this is positive, and equal to 1 otherwise.
If $d>1$, then we speak of \bfind{nontrivial defect}.
%if in addition the extension of $v$ from $K$ to $L$ is unique (i.e., $g=1$), then we call
%$(L|K,v)$ a \bfind{defect extension}.
If in addition $L|K$ is an extension of prime degree, then it follows from (\ref{feq})
that $[L:K]=d=\chara Kv>0$ and $e=f=g=1$, that is, there is a unique
extension of $v$ from $K$ to $L$ and $(L|K,v)$ is immediate. We have proved:

\begin{lemma}                        \label{Gepd}
If $L$ is a normal extension of prime degree $p$ of $(K,v)$ with nontrivial defect, then the extension of $v$
from $K$ to $L$ is unique and $(L|K,v)$ is immediate.
\end{lemma}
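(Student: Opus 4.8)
The plan is to read the whole statement off from the fundamental equality (\ref{feq}) together with the Lemma of Ostrowski, using only that $[L:K]=p$ is prime. Since $L|K$ is finite and normal, the discussion preceding the statement applies: the extensions $v_1,\dots,v_g$ of $v$ to $L$ share a common ramification index $e$ and a common inertia degree $f$, and we have $[L:K]=d\cdot e\cdot f\cdot g$. Here $d,e,f,g$ are positive integers, and as $[L:K]=p$ is prime, at most one of the four factors can exceed $1$, and such a factor must then equal $p$.

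Next I would bring in the defect hypothesis. Nontrivial defect means $d>1$; by the Lemma of Ostrowski this forces $\chara Kv>0$ and exhibits $d$ as a positive power of $\chara Kv$, so in particular $d\ge 2$. Since $d$ divides the prime $p$ and $d>1$, primality gives $d=p$, and hence $e=f=g=1$. Writing $d=(\chara Kv)^m$ with $m\ge 1$ and comparing with $d=p$ then yields $m=1$ and $d=p=\chara Kv$. Finally I would translate the equalities $e=f=g=1$ into the two assertions: from $g=1$ the extension of $v$ from $K$ to $L$ is unique; from $e=(v_iL:vK)=1$ the value groups coincide, and from $f=[Lv_i:Kv]=1$ the residue fields coincide, which is precisely the definition of an immediate extension, so $(L|K,v)$ is immediate.

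There is no genuinely hard step here: once Ostrowski pins $d$ to a prime power, the factorization of the prime $p$ as $d\cdot e\cdot f\cdot g$ leaves only one possibility. The only point requiring care is the order of the deductions—one must first extract $\chara Kv>0$ from the inequality $d>1$ before invoking that $d$ is a power of the residue characteristic, rather than assuming positive residue characteristic at the outset.
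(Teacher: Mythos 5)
Your proof is correct and follows essentially the same route as the paper's: the paper also deduces the lemma directly from the fundamental equality (\ref{feq}) and the Lemma of Ostrowski, concluding from primality of $[L:K]$ and $d>1$ that $d=p=\chara Kv$ and $e=f=g=1$. Your added care about first extracting $\chara Kv>0$ from $d>1$ before using that $d$ is a power of the residue characteristic is a sound (if implicit in the paper) refinement, not a deviation.
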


\parm
We will almost always consider extensions $(L|K,v)$ for which the extension of $v$ from $K$ to $L$ is unique.
We will call such extensions \bfind{uv--extensions} in short; they
are necessarily algebraic extensions. Note that every purely inseparable algebraic
extension is a uv--extension.

For a finite uv--extension $(L|K,v)$, we can define its \bfind{defect}
even if the extension is not normal:
\[
d(L|K,v)\>:=\>\frac{[L:K]}{(vL:vK)[Lv:Kv]}\>.
\]
By the Lemma of Ostrowski, this is a power of $p$ (including
$p^0=1$), where $p=\chara Kv$ if this is positive, and $p=1$ otherwise (this is called the \bfind{characteristic exponent} of $Kv$).
The extension is called \bfind{defectless} if $d(L|K,v)=1$; otherwise, we call it a \bfind{defect extension}.
Note that if $(L|K,v)$ is a defect extension of prime degree $p$, then $p=\chara Kv$. We note:
\begin{lemma}                     \label{imuv}
If $(L|K,v)$ is a finite immediate uv--extension, then $[L:K]$ is a power of $p$ and $d(L|K,v)=[L:K]$.
\end{lemma}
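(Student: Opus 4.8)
The plan is to obtain both assertions directly from the defining formula of the defect, using immediacy to collapse the ramification index and the inertia degree. First I would unwind the hypothesis that $(L|K,v)$ is immediate: by definition the canonical embeddings of $vK$ into $vL$ and of $Kv$ into $Lv$ are onto. Since these canonical embeddings are also injective, surjectivity forces $vL=vK$ and $Lv=Kv$, so that the ramification index and the inertia degree both equal $1$, that is, $(vL:vK)=1$ and $[Lv:Kv]=1$.

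Next, since $(L|K,v)$ is a finite uv--extension, its defect is well defined by the formula $d(L|K,v)=[L:K]/\bigl((vL:vK)[Lv:Kv]\bigr)$. Substituting the two values just computed gives $d(L|K,v)=[L:K]$, which is the second assertion. For the first assertion I would invoke the consequence of the Lemma of Ostrowski already recorded above, namely that for any finite uv--extension the defect $d(L|K,v)$ is a power of the characteristic exponent $p$ of $Kv$. Combining this with the equality $d(L|K,v)=[L:K]$ yields that $[L:K]$ is itself a power of $p$.

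There is essentially no obstacle here: the content of the lemma is carried entirely by the definition of the defect and by the Ostrowski bound, both available from the preceding text. The only point deserving a moment's care is that the defect formula is meaningful precisely because we are dealing with a uv--extension, so that $v$ has a unique extension to $L$ and the indices $(vL:vK)$ and $[Lv:Kv]$ are unambiguously defined; this is exactly what is assumed. One should also keep in mind that $p$ is the characteristic exponent, so that the claim ``$[L:K]$ is a power of $p$'' remains compatible with the case $\chara Kv=0$, where $p=1$ and a finite immediate extension is forced to be trivial.
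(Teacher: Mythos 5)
Your proof is correct and follows exactly the reasoning the paper leaves implicit (the lemma is stated there without proof as a direct consequence of the definitions): immediacy collapses $(vL:vK)$ and $[Lv:Kv]$ to $1$, so the defect formula gives $d(L|K,v)=[L:K]$, and the previously recorded Ostrowski consequence makes this a power of the characteristic exponent $p$. Your closing remark about the case $\chara Kv=0$, where $p=1$ and the extension must be trivial, is a correct and sensible clarification.
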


A valued field $(K,v)$ is \bfind{henselian} if it satisfies Hensel's Lemma, or equivalently, if the extension of
$v$ to the algebraic closure $\tilde{K}$ of $K$ is unique (i.e., $\tilde{K}$ is a uv--extension of $(K,v)$).
In this case, $v$ extends uniquely to each algebraic extension of $K$.
Every algebraically closed valued field is trivially henselian.

Every valued field $(K,v)$ admits a \textbf{henselization}, that is, a
minimal henselian extension of $(K,v)$, in the sense that it admits a unique valuation preserving embedding over
$K$ in every other henselian extension of $(K,v)$. In particular, if $w$ is any extension of $v$
to $\tilde{K}$, then $(K,v)$ has a unique henselization in $(\tilde{K},w)$, as it is the decomposition field of the
normal extension $(K\sep|K,v)$, where $K\sep\subseteq\tilde{K}$ is the separable-algebraic closure of $K$.

Henselizations of $(K,v)$ are unique up to valuation preserving isomorphism over $K$. Moreover, they are always
immediate separable-algebraic extensions of $(K,v)$ (cf.\ \cite[Theorem~17.19]{[En]}).
A valued field is henselian if and only if it is equal to any (and thus all) of its henselizations.

\pars
The following fact is Lemma~2.1 of \cite{[B--K]}:
\begin{lemma}                               \label{uvdisj}
An algebraic extension $(L|K,v)$ is a uv--extension if and only if
for an arbitrary henselization $K^h$ of $(K,v)$, the extensions $L|K$
and $K^h|K$ are linearly disjoint.
\end{lemma}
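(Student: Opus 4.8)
The plan is to fix an extension $w$ of $v$ to the algebraic closure $\tilde{K}$ and to take $K^h$ to be the corresponding henselization of $(K,v)$ inside $(\tilde{K},w)$; since henselizations are unique up to valuation-preserving isomorphism over $K$, this loses no generality. I would first reduce to the case of a finite separable extension. Both properties in the statement are finitary: an algebraic extension is a uv--extension iff each of its finite subextensions is one, and $L|K$ is linearly disjoint from $K^h|K$ iff each finite subextension of $L|K$ is linearly disjoint from $K^h|K$, so it suffices to treat finite $L|K$. Next, let $L_0$ be the maximal separable subextension of $L|K$, so that $L|L_0$ is purely inseparable. Since every purely inseparable extension is a uv--extension, $v$ has a unique extension to $L$ iff it has a unique extension to $L_0$. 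Moreover, as $K^h|K$ is separable, $L_0K^h|L_0$ is separable, hence automatically linearly disjoint from the purely inseparable $L|L_0$; by transitivity of linear disjointness, $L$ and $K^h$ are then linearly disjoint over $K$ iff $L_0$ and $K^h$ are. Thus both sides of the asserted equivalence descend to $L_0$, and I may assume $L|K$ finite and separable.

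For finite separable $L|K$, write $L=K(\vartheta)$ with minimal polynomial $f\in K[X]$ of degree $[L:K]$. The key step is the standard correspondence, mediated by the henselian field $K^h$, between extensions of $v$ to $L$ and irreducible factors of $f$ over $K^h$. Factoring $f=f_1\cdots f_g$ into monic irreducibles in $K^h[X]$ (pairwise distinct, since $f$ is separable), one has $L\otimes_K K^h\cong\prod_{i=1}^g K^h[X]/(f_i)$, and each factor $K^h[X]/(f_i)$ is a field carrying a unique extension of $w|_{K^h}$, precisely because $K^h$ is henselian. Restricting these valuations along $L\hookrightarrow K^h[X]/(f_i)$ yields exactly the distinct extensions of $v$ to $L$, and every extension arises this way. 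Consequently the number of extensions of $v$ to $L$ equals the number $g$ of irreducible factors of $f$ over $K^h$.

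From this the equivalence is immediate. The extension $(L|K,v)$ is a uv--extension iff $g=1$, that is, iff $f$ remains irreducible over $K^h$; and $f$ is irreducible over $K^h$ iff $[LK^h:K^h]=\deg f=[L:K]$, which, as $L|K$ is finite, is exactly the condition that $L$ and $K^h$ be linearly disjoint over $K$. Combining this with the reductions of the first paragraph completes the proof.

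I would expect the main obstacle to be the correspondence of the second paragraph, namely the verification that the irreducible factors of $f$ over the henselization biject with the extensions of $v$ to $L$. This is the point where henselianity of $K^h$ is genuinely used (each factor field admits a unique prolongation of $w|_{K^h}$), together with the fact, recorded in the excerpt, that $K^h|K$ is an immediate separable-algebraic extension, so that passing to $K^h$ neither introduces nor merges extensions of $v$. The remaining reductions to the finite separable case and the linear-disjointness bookkeeping are routine.
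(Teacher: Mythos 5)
Your proof is correct, but there is nothing in the paper to compare it with line by line: the paper does not prove this lemma at all, it quotes it as Lemma~2.1 of \cite{[B--K]}. Your argument is, in essence, the standard proof of that cited result. The reductions in your first paragraph are sound: both properties are of finite character, and the purely inseparable part can be split off because a purely inseparable extension is automatically a uv--extension and automatically linearly disjoint from the separable extension $L_0.K^h|L_0$, so the tower criterion for linear disjointness applies. The core is then the classical correspondence between the extensions of $v$ to a finite separable $L=K(\vartheta)$ and the irreducible factors of the minimal polynomial $f$ over $K^h$, via $L\otimes_K K^h\cong\prod_{i=1}^g K^h[X]/(f_i)$; both uv--ness and linear disjointness (the latter being $[L.K^h:K^h]=[L:K]$, i.e., irreducibility of $f$ over $K^h$) amount to $g=1$. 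Two remarks. First, your opening claim that fixing one henselization ``loses no generality'' deserves a sentence of justification: any two henselizations inside $\tilde{K}$ are $K$-conjugate, so $f$ has the same number of irreducible factors over each, and since your whole equivalence is routed through that number, the conclusion is indeed independent of the choice. Second, your closing heuristic for why the correspondence is exact --- that $K^h|K$ is immediate, so passing to $K^h$ ``neither introduces nor merges extensions'' --- points at the wrong property: immediacy does no work here (a general immediate extension can perfectly well merge or split extensions of $v$). What makes the factor-to-valuation map bijective is the universal property of the henselization, namely its unique valuation-preserving $K$-embedding into every henselian extension of $(K,v)$: this yields both that every extension of $v$ to $L$ factors through some $K^h[X]/(f_i)$ and that distinct factors induce distinct valuations on $L$ (each $K^h[X]/(f_i)$ being a henselization of $(L,v_i)$, with any $L$-isomorphism between two of them forced to fix $K^h$, hence to identify $f_i$ with $f_j$). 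Since that correspondence is a standard theorem, citing it is legitimate, and your proof stands.
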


\mn
{\bf For the remainder of this paper, we fix an extension of $v$ from $K$ to $\tilde{K}$.} This will also fix the
henselization of $(K,v)$. Therefore, we will speak of {\it the} henselization of $(K,v)$, and denote it by $(K^h,v)$.

Since the henselization is an immediate extension and the compositum $L.K^h$ of $L$ and $K^h$ lies in $L^h$ (in fact, it is equal to $L^h$), this lemma yields:

\begin{lemma}                               \label{hensdef}
For every finite uv--extension $(L|K,v)$,
\[
d(L|K,v)\>=\>d(L.K^h|K^h,v)\>.
\]
\end{lemma}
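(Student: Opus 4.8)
The plan is to compute $d(L.K^h|K^h,v)$ directly from the defining formula and to match each of its three ingredients with the corresponding ingredient of $d(L|K,v)$. First I would note that the defect on the right-hand side is even well-defined: since $K^h$ is henselian, $v$ extends uniquely to every algebraic extension of $K^h$, so $(L.K^h|K^h,v)$ is a finite uv--extension and
\[
d(L.K^h|K^h,v)\>=\>\frac{[L.K^h:K^h]}{(v(L.K^h):vK^h)\,[(L.K^h)v:K^h v]}\>.
\]

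For the numerator I would invoke Lemma~\ref{uvdisj}. Since $(L|K,v)$ is a uv--extension, $L|K$ and $K^h|K$ are linearly disjoint; hence a $K$-basis of $L$ remains linearly independent over $K^h$, which gives $[L.K^h:K^h]=[L:K]$.

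For the denominator I would use immediateness on both the lower and the upper field. Because the henselization $K^h|K$ is immediate, we have $vK^h=vK$ and $K^h v=Kv$. Using the identity $L.K^h=L^h$ recorded just before the statement, together with the fact that $L^h|L$ is a henselization and therefore immediate, we get $v(L.K^h)=vL^h=vL$ and $(L.K^h)v=L^h v=Lv$. Consequently $(v(L.K^h):vK^h)=(vL:vK)$ and $[(L.K^h)v:K^h v]=[Lv:Kv]$. Substituting these three equalities into the formula above yields $d(L.K^h|K^h,v)=[L:K]/((vL:vK)[Lv:Kv])=d(L|K,v)$, as claimed.

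There is no deep obstacle here; the proof is an assembly of the preceding lemmas. The one point that genuinely requires care is that immediateness is \emph{not} preserved under arbitrary base change, so one may not argue that $L.K^h|L$ is immediate simply because $K^h|K$ is. The essential input is instead the identification $L.K^h=L^h$, which lets us recognize the compositum as a henselization of $L$ and so transport the immediateness (hence the equalities of value groups and residue fields) to the upper field.
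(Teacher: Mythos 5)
Your proof is correct and follows essentially the same route as the paper: the paper obtains the lemma in one line from Lemma~\ref{uvdisj} (linear disjointness giving $[L.K^h:K^h]=[L:K]$) together with the facts that henselizations are immediate and that $L.K^h=L^h$, which is exactly the assembly you carry out in detail. Your closing caveat---that immediateness of $K^h|K$ alone does not transfer to the compositum, and that the identification $L.K^h=L^h$ is the essential input---is a correct reading of why the paper records that identity just before the statement.
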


\mn
%
%ÄÄÄÄÄÄÄÄÄÄÄÄÄÄÄÄÄÄÄÄÄÄÄÄÄÄÄÄÄÄÄÄÄÄÄÄÄÄÄÄÄÄÄÄÄÄÄÄÄÄÄÄÄÄÄÄÄÄÄÄÄÄÄÄÄÄÄÄÄÄÄ
%
\subsection{Distances}                                   \label{sectdist}
Take an arbitrary extension $(L|K,v)$ of valued fields and $a\in L\setminus K$.
There are several possible definitions for the \bfind{distance} of $a$ from $K$ that have been used in papers
by the first author. We choose the definition that is most suitable for our purposes in this paper.
%
%although it cannot be used when $a$ is transcendental over $K$.
%We assume that $(K(a)|K,v)$ is an algebraic extension.
%Then $vK(a)$ and hence also $v(a-K)$ is a subset of the
%divisible hull $\widetilde{vK}=v\tilde{K}$ of $vK$.

By $\dist(a,K)$ we denote the cut induced by the set $v(a-K)\cap\widetilde{vK}$
in the divisible hull $\widetilde{vK}$ of $vK$. Namely, the lower cut set of $\dist(a,K)$ is the
smallest initial segment that contains $v(a-K)\cap\widetilde{vK}$. This definition
is slightly different from the one introduced in \cite{[Ku3]} and \cite{[KuV]}. There, we have used the cut in
$\widetilde{vK}$ induced by the subset $v(a-K)\cap vK$ to define $\dist(a,K)$.
%
%When both distances are used, as in the paper \cite{[B]}, we
%write $\dist_{\tilde{K}}(a,K)$ for the one that is defined by $v(a-K)\cap\widetilde{vK}$. But as this one will
%be the only one we will use in the present paper, we will drop the subscript.
%
A detailed study of the new notion of distance and a comparison with the former notion can be found in \cite{[B]}.
Note that when $v(a-K)\subseteq vK$, the two notions coincide.

\pars
Our definition enables us to compare $\dist(a,K)$ with $\dist(a,L)$
when $(L|K,v)$ is an algebraic extension since then, both $\dist(a,K)$
and $\dist(a,L)$ are cuts in the same ordered abelian group $\widetilde{vK}=\widetilde{vL}$.
Then $\dist(a,K)<\dist(a,L)$ will mean that the left cut set of $\dist(a,K)$
is a proper subset of that of $\dist(a,L)$.

\pars
The following is Lemma~3.9 of \cite{[B]}.

\begin{lemma}                               \label{distdist}
Take algebraic extensions $(L|K,v)$ and $(L(a)|L,v)$. Then $\dist(a,K)\leq\dist(a,L)$.
If $\dist(a,K)<\dist(a,L)$, then there is $b\in L$ such that
\[
v(a-b)\> >\>v(a-K)\>=\>v(b-K) \quad\mbox{ and }\quad
\dist(b,K)\>=\>\dist(a,K)\;.
\]
\end{lemma}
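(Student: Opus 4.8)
The plan is to treat the two assertions separately, using throughout that $a$ is algebraic over $K$ (since $L(a)|L$ and $L|K$ are algebraic), so that $a\in\tilde K$ and every value $v(a-c)$ with $c\in K$ already lies in $v\tilde K=\widetilde{vK}=\widetilde{vL}$. Consequently $v(a-K)\cap\widetilde{vK}=v(a-K)$ and $v(a-L)\cap\widetilde{vL}=v(a-L)$, and both $\dist(a,K)$ and $\dist(a,L)$ are cuts in the single group $\widetilde{vK}$, so comparing them makes sense.

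For the inequality $\dist(a,K)\leq\dist(a,L)$ I would argue purely by monotonicity. Since $K\subseteq L$, we have $v(a-K)\subseteq v(a-L)$. By definition the lower cut set of each distance is the smallest initial segment containing the respective set, i.e.\ its downward closure; and the downward closure is monotone in its argument. Hence the lower cut set of $\dist(a,K)$ is contained in that of $\dist(a,L)$, which is exactly $\dist(a,K)\leq\dist(a,L)$.

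Now assume the inequality is strict. The crux is to produce a single $b\in L$ that approximates $a$ better than any element of $K$. Write $A=v(a-K)$ and $B=v(a-L)$. I claim some $\beta\in B$ exceeds every element of $A$: if not, then $B$ would lie in the downward closure of $A$, forcing the downward closures of $A$ and $B$ — hence the two cuts — to coincide, contradicting strictness. Picking $b\in L$ with $v(a-b)=\beta$, we get $v(a-b)>v(a-c)$ for all $c\in K$, that is, $v(a-b)>v(a-K)$, which is the first desired property.

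The remaining two properties then follow by the ultrametric inequality. For every $c\in K$, from $b-c=(a-c)-(a-b)$ and $v(a-b)>v(a-c)$ we read off $v(b-c)=v(a-c)$, whence $v(b-K)=v(a-K)$; since these sets agree they induce the same cut, giving $\dist(b,K)=\dist(a,K)$. I expect the only real obstacle to be the extraction of $b$: one must invoke strictness of the cut inequality, rather than the mere inclusion $K\subseteq L$, to secure an element of $v(a-L)$ strictly above all of $v(a-K)$; the rest is the routine ultrametric computation just indicated.
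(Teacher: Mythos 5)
Your proof is correct. Note, however, that the paper itself gives no proof of this lemma: it is quoted verbatim as Lemma~3.9 of \cite{[B]}, so there is no internal argument to compare with, and what you have done is supply a self-contained proof. Your route is the natural one. The inequality $\dist(a,K)\leq\dist(a,L)$ follows, as you say, from monotonicity of downward closures, once one observes that both distances are cuts in the same group $\widetilde{vK}=\widetilde{vL}$ (a point the paper itself makes for algebraic extensions) and that $v(a-K)\subseteq\widetilde{vK}$ because $a$ is algebraic over $K$. For the strict case, your extraction of $\beta\in v(a-L)$ lying above all of $v(a-K)$ is exactly the right use of strictness: if no such $\beta$ existed, then $v(a-L)$ would lie in the downward closure of $v(a-K)$, and together with $v(a-K)\subseteq v(a-L)$ the two downward closures, hence the two cuts, would coincide. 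The ultrametric identity $v(b-c)=\min\{v(a-c),v(a-b)\}=v(a-c)$ for all $c\in K$ then yields $v(b-K)=v(a-K)$ and therefore $\dist(b,K)=\dist(a,K)$. The only point you leave implicit is the degenerate situation $a\in L$, where $\infty\in v(a-L)$ and your identification $v(a-L)\cap\widetilde{vL}=v(a-L)$ fails literally: there, strictness forces $a\notin K$, so one may take $b=a$ and all three conclusions hold trivially. That is a cosmetic omission, not a gap.
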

%\begin{proof}
%The first assertion holds since $v(a-K)\subseteq v(a-L)$.
%Suppose that $\dist(a,K)<\dist(a,L)$. This means that there is $b\in L$
%such that $v(a-b)>v(a-K)$. Then for every $c\in K$, $v(b-c)=\min
%\{v(a-b),v(a-c)\}=v(a-c)$, whence $v(b-K)=v(a-K)$. This implies that
%$\dist(b,K)=\dist(a,K)$.
%\end{proof}

\pars
If $(L|K,v)$ is an arbitrary valued field extension and $a\in L$, then
we will say that $a$ is \bfind{weakly immediate over $K$} if $v(a-K)$ has no maximal
element. In the language of pseudo Cauchy sequences, this means that $a$
is a pseudo limit of a pseudo Cauchy sequence (also called ``pseudo convergent sequence'' in \cite{[Ka]})
in $(K,v)$ that has no pseudo limit in $K$. In the language used in \cite{[KuV]} it means that the
approximation type of $a$ over $K$ is immediate. Note that this does \emph{not} imply that
the extension $(K(a)|K,v)$ is immediate (cf.\ \cite[Example~3.17]{[B]}). But conversely, by what
we have already said in the introduction, every
element in an immediate extension of $(K,v)$ is weakly immediate over $K$.
Observe that if $a$ is weakly immediate over $K$ then $\infty\notin v(a-K)$, that is, $a\notin K$.

\begin{lemma}                               \label{2.7}
Take a finite defectless uv--extension $(L|K,v)$. Then the following assertions hold.
\sn
a) \ For every $b\in L\setminus K$, the set $v(b-K)$ has a maximal element.
\sn
b) \ Every $a\in\tilde{L}=\tilde{K}$ that is weakly immediate over $K$ is also weakly immediate over $L$, and
\[
%v(a-K')\>=\>v(a-K) \quad\mbox{ and }\quad
\dist(a,L)\>=\>\dist(a,K)\;.
\]
\end{lemma}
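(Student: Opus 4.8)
The plan is to prove part a) first by producing a valuation-independent basis of $(L|K,v)$ adapted to the value group and residue field, and then to deduce part b) formally from part a) together with Lemma~\ref{distdist}. Since $(L|K,v)$ is a finite defectless uv--extension, $[L:K]=(vL:vK)[Lv:Kv]=:ef$. I would choose $x_1=1,x_2,\dots,x_e\in L$ whose values represent the $e$ cosets of $vL/vK$ (with $vx_1=0$), and $\eta_1=1,\eta_2,\dots,\eta_f\in\mathcal{O}_L$ whose residues form a $Kv$-basis of $Lv$. The claim is that the $ef$ products $b_{ij}:=x_i\eta_j$ form a valuation-independent basis, i.e.\ for all $c_{ij}\in K$,
\[
v\Bigl(\sum_{i,j}c_{ij}x_i\eta_j\Bigr)\>=\>\min_{i,j}v(c_{ij}x_i\eta_j)\>.
\]
I would verify this in two ultrametric steps: first, fixing $i$ and using that the residues $\eta_jv$ are $Kv$-linearly independent, the value of $\sum_j c_{ij}\eta_j$ equals $\min_j v(c_{ij})\in vK$; second, the resulting values $vx_i+\min_j v(c_{ij})$ lie in pairwise distinct cosets of $vL/vK$ (as the $vx_i$ are distinct coset representatives), so there is no cancellation between different $i$. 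These $ef$ elements are thus $K$-linearly independent, and since $ef=[L:K]$ by defectlessness they form a basis containing $b_{11}=1$.

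Given this basis, part a) is a short computation. Writing $b=\sum_{ij}c_{ij}b_{ij}\in L\setminus K$ and separating the coefficient $c_{11}$ of $b_{11}=1$, for every $c\in K$ valuation-independence gives $v(b-c)=\min\{v(c_{11}-c),\gamma\}$, where $\gamma:=\min_{(i,j)\neq(1,1)}v(c_{ij}b_{ij})$. Since $b\notin K$, some coefficient with $(i,j)\neq(1,1)$ is nonzero, so $\gamma$ is a genuine element of $vL$; taking $c=c_{11}$ yields $v(b-c)=\gamma$, while every $c$ yields $v(b-c)\le\gamma$. Hence $\gamma=\max v(b-K)$, proving a).

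For part b), let $a\in\tilde{K}$ be weakly immediate over $K$, so that $v(a-K)$ has no maximal element and $a\notin K$. Lemma~\ref{distdist} already gives $\dist(a,K)\le\dist(a,L)$. For the reverse inequality I would show that no $b\in L$ can satisfy $v(a-b)>v(a-K)$: if it did, then for every $c\in K$ we would have $v(b-c)=\min\{v(a-c),v(a-b)\}=v(a-c)$, so $v(b-K)=v(a-K)$ has no maximum and $b\in L\setminus K$, contradicting part a). Consequently every $b\in L$ satisfies $v(a-b)\le\gamma$ for some $\gamma\in v(a-K)$, so $v(a-L)\cap\widetilde{vK}$ generates the same initial segment of $\widetilde{vK}$ as $v(a-K)$; this gives $\dist(a,L)\le\dist(a,K)$ and hence equality. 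The same bound shows that $v(a-L)$ has no maximal element: given any $b\in L$, since $v(a-K)$ has no maximum we may choose $c\in K\subseteq L$ with $v(a-c)>\gamma\ge v(a-b)$, so $b$ is never a best approximation. Thus $a$ is weakly immediate over $L$ as well.

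I expect the main obstacle to be the honest verification that the $b_{ij}$ form a valuation-independent basis, and in particular the precise use of defectlessness exactly at the point where $K$-linear independence is upgraded to spanning via $ef=[L:K]$; the two-step non-cancellation argument must be carried out carefully, although, notably, it does not require $K$ to be henselian. Once part a) is established, part b) is purely formal, the only subtlety being to track that $v(a-b)\in\widetilde{vK}$ for $b\in L$ so that the comparison of cuts takes place in the single group $\widetilde{vK}=\widetilde{vL}$.
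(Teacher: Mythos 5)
Your proof is correct, and it is worth noting how it sits relative to the paper's. The printed proof of Lemma~\ref{2.7} is pure citation: part a) to Proposition~3.12 and Lemma~3.10 of \cite{[B]}, part b) to Corollary~3.11 of \cite{[B]}. Your argument for a), however, is in substance identical to the draft argument left after the bibliography in the source file: there the products $a_ib_j$ of coset representatives and residue-basis lifts are taken to be valuation-independent by appeal to \cite[Lemma~2.8]{[KuTrans]}, defectlessness (i.e.\ $ef=[L:K]$) upgrades $K$-linear independence to a basis, and the maximum of $v(b-K)$ is exhibited at $c=c_{11}$, exactly as you do. What you add is a self-contained verification of valuation independence (the residue step within one coset, then no cancellation across distinct cosets), which is carried out correctly; your remark that henselianity is nowhere needed is also accurate. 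For part b) you give a direct deduction from part a) in place of the citation to \cite{[B]}: if some $b\in L$ satisfied $v(a-b)>v(a-K)$, then $v(b-c)=v(a-c)$ for all $c\in K$, so $v(b-K)=v(a-K)$ would have no maximal element while $b\in L\setminus K$, contradicting a); this yields both $\dist(a,L)=\dist(a,K)$ and the weak immediacy of $a$ over $L$. This argument is complete, including the point that all values $v(a-b)$, $b\in L$, lie in $\widetilde{vK}=\widetilde{vL}$ because $a$ is algebraic over $K$, so the two distances are cuts in the same ordered group and can be compared. The trade-off: the paper's route is short because it outsources both parts; yours is longer but self-contained and makes visible exactly where defectlessness enters, namely in the single equality $ef=[L:K]$.
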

\begin{proof}
a): \ This follows from Proposition 3.12 and Lemma 3.10 of \cite{[B]}.

\sn
b): \ This is Corollary~3.11 of \cite{[B]}.
%
%Suppose that $v(a-K)$ has no maximal element but $v(a-L)$ does. Then this element must be larger than every
%element in $v(a-K)$ and consequently, $\dist(a,L)> \dist(a,K)$. Then by
%Lemma~\ref{distdist} there is $b\in L$ such that $v(a-K)=v(b-K)$. Since
%$v(a-K)$ has no maximal element, it follows that also $v(b-K)$ has no maximal element. But this contradicts
%part a) of our lemma.%

\end{proof}

To obtain another important distance equality, we need the following theorem from \cite{[Ku4]}:

\begin{theorem}                             \label{apprMT2}
Take $K^h$ to be the henselization  of $K$ in $(\tilde{K},v)$. Take $a\in
\tilde{K}\setminus K$ and assume that for some $b\in K^h$,
\[
v(a-b)\;>\;v(a-K)\>.
\]
Then $K^h$ and $K(a)$ are not linearly disjoint over $K$.
\end{theorem}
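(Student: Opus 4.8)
The plan is to prove the contrapositive. By Lemma~\ref{uvdisj}, saying that $K^h$ and $K(a)$ are linearly disjoint over $K$ is the same as saying that $(K(a)|K,v)$ is a uv--extension, so I would assume this and aim for a contradiction with the hypothesis $v(a-b)>v(a-K)$. First I would clean up the approximant $b$: since $b\in K^h$ witnesses $\dist(a,K^h)>\dist(a,K)$, I apply Lemma~\ref{distdist} to the algebraic extensions $K\subseteq K^h$ and $K^h\subseteq K^h(a)$ to replace $b$ by an element of $K^h$, again called $b$, with $v(a-b)>v(a-K)=v(b-K)$ and $\dist(b,K)=\dist(a,K)$. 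In particular $v(b-K)$ has no largest element, so $b$ is weakly immediate over $K$ and $b\notin K$.

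Next I would move the whole configuration up to $K(a)$. Because $(K(a)|K,v)$ is a finite uv--extension, the compositum $K(a).K^h$ equals the henselization $K(a)^h$ (the fact quoted just before Lemma~\ref{hensdef}), which is an immediate extension of $K(a)$, and moreover $K(a)\cap K^h=K$, so $b\in K(a)^h\setminus K(a)$. Since $K(a)^h|K(a)$ is immediate, the set $v(b-K(a))$ has no maximal element (as noted in the introduction, via \cite[Theorem~1]{[Ka]}). As $a\in K(a)$ and $v(b-a)=v(a-b)>v(b-K)$, the value $v(b-a)$ lies in the lower cut set of $\dist(b,K(a))$ but, being an upper bound of $v(b-K)$, not in that of $\dist(b,K)$; hence $\dist(b,K(a))>\dist(b,K)$ strictly. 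If the extension $(K(a)|K,v)$ happens to be \emph{defectless}, this already finishes the argument: Lemma~\ref{2.7}(b) forces $\dist(b,K(a))=\dist(b,K)$ for the weakly immediate element $b$, contradicting the strict inequality just obtained.

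The hard part is exactly the complementary case, in which $(K(a)|K,v)$ is a defect extension (this case genuinely occurs, for instance whenever $a$ is purely inseparable over $K$), since Lemma~\ref{2.7} is then unavailable. Here the obstruction is already visible: uniqueness of the extension of $v$ forces $v(a_j-c)=v(a-c)$ for every conjugate $a_j$ of $a$ over $K$ and every $c\in K$, so \emph{all} conjugates of $a$ are pseudo limits of the same pseudo Cauchy sequence in $K$ and therefore cluster around $b$, with $v(b-a_j)$ large for every $j$. This clustering is precisely what defeats the most naive alternative, namely using Hensel's Lemma in the henselian field $K^h$ to produce an actual root of the minimal polynomial $f$ of $a$ near $b$ (which would contradict the irreducibility of $f$ over $K^h$): the clustering makes $v(f'(b))$ large, so the Newton condition $v(f(b))>2v(f'(b))$ need not hold. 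Closing this gap is, I expect, the main technical obstacle; it would require a quantitative, Newton-polygon-type control of $v(f(b))$ against $v(f'(b))$ in which the defect and the distance $\dist(a,K)$ enter explicitly.
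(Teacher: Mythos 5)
Your proposal is incomplete, and the gap you yourself flag is not a corner case but the entire content of the theorem. What you do prove is correct: assuming via Lemma~\ref{uvdisj} that $(K(a)|K,v)$ is a uv--extension, you normalize $b$ by Lemma~\ref{distdist} so that $v(a-b)>v(a-K)=v(b-K)$ (whence $b\in K^h\setminus K$, and $b$ is weakly immediate over $K$ because $K^h|K$ is immediate), you place $b\in K(a)^h\setminus K(a)$ using $K(a).K^h=K(a)^h$ and $K(a)\cap K^h=K$, you deduce $\dist(b,K(a))>\dist(b,K)$, and part b) of Lemma~\ref{2.7} then gives a contradiction \emph{when $(K(a)|K,v)$ is defectless}. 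But in this paper Theorem~\ref{apprMT2} is needed (through Lemma~\ref{dKdKh}) precisely in the presence of defect: for a finite defectless uv--extension, part a) of Lemma~\ref{2.7} says there are no weakly immediate elements in $L\setminus K$ at all, so Lemma~\ref{dKdKh} is vacuous there, while its genuine applications --- e.g.\ in Lemma~\ref{EF}, where $(K(a)|K,v)$ is immediate of degree $p$ and hence of defect $p$ by Lemma~\ref{imuv}, and in the reduction step of Theorem~\ref{nonhens} --- all concern defect extensions. So your argument establishes the theorem only where it was already an easy consequence of Lemma~\ref{2.7}, and is silent exactly where it is used. The defect case also cannot be patched by appealing to Lemma~\ref{EF} or Lemma~\ref{dKdKh}: inside this paper their proofs rest on Theorem~\ref{apprMT2} itself, via (\ref{vx-Khens}), so that route is circular.

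There is also no internal proof to measure your attempt against: the paper deliberately imports Theorem~\ref{apprMT2} from \cite{[Ku4]} without proof, and the proof given there does not follow from the lemmas quoted in this paper. Your closing diagnosis is accurate --- uniqueness of the extension of $v$ to $K(a)$ forces $v(a_j-c)=v(a-c)$ for all conjugates $a_j$ and all $c\in K$, so all conjugates are pseudo limits of the same pseudo Cauchy sequence, $vf'(b)$ becomes large, and Hensel--Newton or Krasner-type arguments collapse --- but naming the obstruction and postulating an unspecified ``quantitative Newton-polygon control'' is not overcoming it. As it stands, your text is a correct proof of the defectless special case together with a correct explanation of why the general case is hard; it is not a proof of the theorem.
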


\begin{lemma}                               \label{dKdKh}
Take an algebraic uv--extension $(L|K,v)$. Then for all $a\in L\setminus K$ which are weakly immediate over $K$,
\begin{equation}                            \label{vx-Khens}
v(a-K^h)\>=\>v(a-K) \quad\mbox{ and }\quad \dist(a,K^h)\>=\>\dist(a,K)\;.
\end{equation}
\end{lemma}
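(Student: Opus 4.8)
The plan is to deduce both equalities in (\ref{vx-Khens}) from the single set equality $v(a-K^h)=v(a-K)$. Once that is in hand, the distance equality is automatic: since $(K^h|K,v)$ is immediate we have $vK^h=vK$, hence $\widetilde{vK^h}=\widetilde{vK}$, and because $a$ is algebraic over $K$ all the values in sight already lie in $\widetilde{vK}$, so $v(a-K^h)\cap\widetilde{vK^h}=v(a-K^h)=v(a-K)=v(a-K)\cap\widetilde{vK}$. The cuts inducing $\dist(a,K^h)$ and $\dist(a,K)$ are then the same cut in $\widetilde{vK}$, whence $\dist(a,K^h)=\dist(a,K)$. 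So I would concentrate entirely on the set equality.

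The inclusion $v(a-K)\subseteq v(a-K^h)$ is trivial from $K\subseteq K^h$. For the reverse inclusion, the first step is to invoke the uv-hypothesis: by Lemma~\ref{uvdisj}, $L$ and $K^h$ are linearly disjoint over $K$, and since linear disjointness is inherited by intermediate fields, $K(a)$ and $K^h$ are linearly disjoint over $K$ as well (using $a\in L$). In particular $a\notin K^h$, so $v(a-b)\in\widetilde{vK}$ for every $b\in K^h$. Now fix $b\in K^h$ and apply the contrapositive of Theorem~\ref{apprMT2} to $a\in\tilde{K}\setminus K$: the linear disjointness of $K(a)$ and $K^h$ forbids $v(a-b)>v(a-K)$, so there is some $c\in K$ with $v(a-c)\geq v(a-b)$. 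If equality holds here, then $v(a-b)=v(a-c)\in v(a-K)$ and we are finished.

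The remaining case $v(a-c)>v(a-b)$ is where I must \emph{transfer} the good approximation $b$ of $a$ back to an element of $K$, and I expect this to be the main obstacle. Writing $\gamma:=v(a-b)$, the ultrametric inequality gives $v(b-c)=\gamma$. If $b\in K$ we take $b$ itself; otherwise $b-c\in K^h\setminus K$, so by immediacy of $(K^h|K,v)$ the element $b-c$ is weakly immediate over $K$ and $v((b-c)-K)$ has no maximal element. The decisive observation is that $\gamma=v(b-c)=v((b-c)-0)$ already belongs to $v((b-c)-K)$, and therefore the absence of a maximum produces $d\in K$ with $v((b-c)-d)>\gamma$; this forces $v(d)=\gamma$. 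Then $a-(c+d)=(a-c)-d$ has value $\min\{v(a-c),v(d)\}=\gamma$ because $v(a-c)>\gamma=v(d)$, so $c+d\in K$ realizes $v(a-(c+d))=v(a-b)$, giving $v(a-b)\in v(a-K)$. The subtlety I want to flag is that immediacy of $K^h|K$ does \emph{not} by itself let one approximate $b$ from $K$ beyond a prescribed value; the argument works only because the particular value $\gamma$ is already attained and hence must be surpassed.
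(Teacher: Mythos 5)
Your proof is correct and takes essentially the same route as the paper's: both rest on Lemma~\ref{uvdisj} (linear disjointness of $K(a)$ and $K^h$ over $K$, inherited from $L$) combined with Theorem~\ref{apprMT2} to exclude any $b\in K^h$ with $v(a-b)>v(a-K)$. Your explicit transfer argument in the case $v(a-c)>v(a-b)$ merely fills in a detail that the paper leaves implicit, namely that a value $v(a-b)$ which does not exceed all of $v(a-K)$ must already lie in $v(a-K)$ (using immediacy of the henselization), which is exactly what allows the paper to pass from the strict inclusion $v(a-K)\subsetuneq v(a-K^h)$ to the existence of some $b\in K^h$ with $v(a-b)>v(a-K)$.
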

\begin{proof}
Take $a\in L\setminus K$ and suppose that $v(a-K)\subsetuneq v(a-K^h)$. Then there is an element $b\in K^h$ such
that $v(a-b)>v(a-K)$. But then by Theorem~\ref{apprMT2}, $K(a)|K$ and hence also $L|K$ is not linearly disjoint from
$K^h|K$, a contradiction to Lemma~\ref{uvdisj}. So we have that $v(a-K)=v(a-K^h)$, which implies the equality of the
distances.
\end{proof}

%
%ÄÄÄÄÄÄÄÄÄÄÄÄÄÄÄÄÄÄÄÄÄÄÄÄÄÄÄÄÄÄÄÄÄÄÄÄÄÄÄÄÄÄÄÄÄÄÄÄÄÄÄÄÄÄÄÄÄÄÄÄÄÄÄÄÄÄÄÄÄÄÄ
%
\subsection{Weakly and strongly immediate elements}
We have already defined what it means for an element in an extension of $(K,v)$ to be weakly immediate
over $(K,v)$. A useful stronger property is the following. Take any extension $(L|K,v)$ of valued fields and an
element $a\in L\setminus K$. Then we will say that $a$ is \bfind{strongly immediate over $K$} if $v(a-K)$ has no
maximal element and in addition, for every polynomial $g\in K[X]$ of degree $<[K(a):K]$ there is
$\alpha\in v(a-K)$ such that for all $c\in K$ with $v(a-c)\geq\alpha$, the value $vg(c)$ is fixed.
\begin{lemma}                                   \label{si}
If the element $a$ is strongly immediate over $K$, then $(K(a)|K,v)$ is immediate. If in addition, $(K(a)|K,v)$ is
a uv--extension, then $[K(a):K]=d((K(a)|K,v)=p^k$ for some $k\geq 1$, with $p$ the characteristic exponent of $Kv$.
\end{lemma}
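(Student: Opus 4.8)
The plan is to prove the two assertions of Lemma~\ref{si} in sequence, since the second builds on the first together with the defect machinery (Lemmas~\ref{Gepd}--\ref{imuv}) collected in the preliminaries.

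\textbf{First assertion.} To show that $(K(a)|K,v)$ is immediate, I would verify that the canonical embeddings $vK\hookrightarrow vK(a)$ and $Kv\hookrightarrow K(a)v$ are onto. The natural strategy is to take an arbitrary element $\xi\in K(a)$, write $\xi=g(a)$ for some polynomial $g\in K[X]$ of degree $<[K(a):K]$, and show that $v\xi\in vK$ and, when $v\xi=0$, that $\xi v\in Kv$. Here the strong immediacy hypothesis does the work directly: for this particular $g$ there is $\alpha\in v(a-K)$ such that $vg(c)$ is constant for all $c\in K$ with $v(a-c)\geq\alpha$. Since $v(a-K)$ has no maximal element, I can choose $c\in K$ with $v(a-c)>\alpha$, so that $v(a-c)$ dominates $\alpha$; the content of the definition is that $g(a)$ and $g(c)$ then have the same value (and in fact the same residue when that value is $0$). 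Making this precise is the crux of the first part: I expect one needs a short argument, comparing $g(a)$ with $g(c)$ via the factorization of $g$ or via a Taylor-type expansion of $g$ around $c$, to see that increasing the approximation $v(a-c)$ beyond $\alpha$ forces $v(g(a)-g(c))>vg(c)$, whence $vg(a)=vg(c)\in vK$ and $g(a)v=g(c)v\in Kv$. This yields $vK(a)=vK$ and $K(a)v=Kv$, i.e.\ immediacy.

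\textbf{Second assertion.} Assuming in addition that $(K(a)|K,v)$ is a uv--extension, I would simply invoke Lemma~\ref{imuv}: a finite immediate uv--extension has $[K(a):K]=d(K(a)|K,v)=p^k$ for some $k\geq 1$ (with $p$ the characteristic exponent of $Kv$), and $k\geq 1$ because $a\notin K$ forces the degree to exceed $1$. The only genuine obligation here is to confirm finiteness of the extension, which is automatic: strong immediacy is phrased in terms of polynomials of degree $<[K(a):K]$, so the hypothesis tacitly presupposes $a$ algebraic of finite degree over $K$; alternatively, $a$ weakly immediate together with the uv--assumption already places $a$ in an algebraic uv--extension.

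\textbf{Main obstacle.} The routine bookkeeping is negligible; the one place demanding care is the passage from ``$vg(c)$ is eventually fixed'' to ``$vg(a)=vg(c)$ and $g(a)v=g(c)v$''. One must rule out that $g(a)$ has strictly larger value than the stabilized $vg(c)$, which could a priori happen if the approximation improvement were absorbed by cancellation. I would control this by writing $g(a)-g(c)=(a-c)h(a,c)$ for a suitable $h$, estimating $v(g(a)-g(c))\geq v(a-c)+(\text{something bounded below})$, and using that $v(a-c)$ can be taken arbitrarily large while $vg(c)$ stays fixed, so the difference term eventually exceeds $vg(c)$. This is the only step where the full strength of the ``$v(a-K)$ has no maximal element'' clause is used, and it is what separates strong immediacy from mere weak immediacy.
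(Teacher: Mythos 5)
You treat the second assertion exactly as the paper does (it is the first assertion combined with Lemma~\ref{imuv}, together with the observation that $a\notin K$ forces $k\geq 1$), so that part is fine. For the first assertion the paper merely cites \cite[Lemma~5.3]{[KuV]}; you instead attempt a self-contained proof, and its crucial step is flawed. The gap is the inference ``$v(a-c)$ can be taken arbitrarily large while $vg(c)$ stays fixed, so the difference term eventually exceeds $vg(c)$''. That $v(a-K)$ has no maximal element does \emph{not} mean that $v(a-c)$ eventually exceeds any prescribed bound: $v(a-K)$ is an initial segment without last element, and in the situations this lemma is designed for it is typically bounded above in $\widetilde{vK}$. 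For instance, for an Artin--Schreier defect extension one has $v(\vartheta-c)<0$ for all $c\in K$ (see the proof of Proposition~\ref{r+1}), so an estimate of the form $v(g(a)-g(c))\geq v(a-c)+C$ can never be forced above a stabilized value $\beta_0=vg(c)$ that happens to satisfy $\beta_0\geq C$. Your lower bound therefore never crosses the threshold, and the argument stalls precisely at the step you call the crux.

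The statement you need (eventually $v(g(a)-g(c))>vg(c)$) is true, but it is essentially equivalent to the desired conclusion $vg(a)=vg(c)$, and it cannot be extracted from a one-sided estimate plus ``no maximal element''. The repair is the Kaplansky-type analysis underlying \cite[Lemma~5.3]{[KuV]} and the paper's Lemma~\ref{Kap}: apply strong immediacy not only to $g$ but to all its formal derivatives $g_i$ (legitimate, since $\deg g_i<\deg g<[K(a):K]$), so that all values $\beta_i=vg_i(c)$ stabilize; then, as in \cite[Lemma~8]{[Ka]}, for $v(a-c)$ sufficiently large in $v(a-K)$ the minimum $\min_{i\geq 1}\bigl(\beta_i+i\,v(a-c)\bigr)$ is attained by a unique index $h$, which upgrades your inequality to the equality $v(g(a)-g(c))=\beta_h+h\,v(a-c)$, a quantity taking pairwise distinct values as $v(a-c)$ increases. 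Now argue by rigidity rather than by growth: by the ultrametric inequality, $vg(a)=\min\bigl(\beta_0,\,\beta_h+h\,v(a-c)\bigr)$ whenever the two entries differ; since $vg(a)$ is one fixed value while $\beta_h+h\,v(a-c)$ runs through infinitely many distinct values, the minimum can be the second entry for at most one choice of $v(a-c)$, whence $vg(a)=\beta_0\in vK$, and then also $v(g(a)-g(c))>\beta_0$ eventually, giving $g(a)v=g(c)v\in Kv$ when $\beta_0=0$. With this replacement your outline becomes a correct proof; as written, the key step is unjustified.
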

\begin{proof}
For the first assertion, see \cite[Lemma~5.3]{[KuV]}.
The second assertion follows from the first together with Lemma~\ref{imuv}.
\end{proof}

In general, even if $(K(a)|K,v)$ is a uv--extension and $a$ is weakly immediate over $K$, the extension may not
be immediate and $a$ may not be strongly immediate over $K$. But this holds if the degree $[K(a):K]$ is a prime:
\begin{lemma}                               \label{EF}
Take a uv--extension $(K(a)|K,v)$ of prime degree $p$ with its generator $a$ weakly immediate
over $K$. Then $(K(a)|K,v)$ is immediate and $a$ is  strongly immediate over $K$.
\end{lemma}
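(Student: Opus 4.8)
The plan is to obtain immediacy at once from the defect theory, and then to derive strong immediacy by passing to the henselization and invoking Kaplansky's analysis of pseudo Cauchy sequences. First I would show that $(K(a)|K,v)$ is immediate. If it were defectless, then, being a finite uv--extension, it would satisfy the hypotheses of Lemma~\ref{2.7}a), so $v(a-K)$ would have a maximal element, contradicting the weak immediacy of $a$. Hence $(K(a)|K,v)$ is a defect extension, its defect is a power of $p$ strictly greater than $1$ dividing $[K(a):K]=p$, and therefore $d(K(a)|K,v)=p=[K(a):K]$. By the definition of the defect this forces $(vK(a):vK)=[K(a)v:Kv]=1$, so the extension is immediate; in particular $vK(a)=vK$.

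For strong immediacy I would first reduce to a henselian base. By Lemma~\ref{uvdisj} the extension $K(a)|K$ is linearly disjoint from $K^h|K$, whence $[K^h(a):K^h]=p$; by Lemma~\ref{hensdef} this extension has defect $p$ and is thus immediate, and by Lemma~\ref{dKdKh} we have $v(a-K^h)=v(a-K)$, so $a$ remains weakly immediate over $K^h$. Since $K\subseteq K^h$, $K[X]\subseteq K^h[X]$ and $v(a-K)=v(a-K^h)$, strong immediacy of $a$ over $K^h$ descends to strong immediacy over $K$; I may therefore assume that $K=K^h$ is henselian. Now pick a pseudo Cauchy sequence $(a_\nu)$ in $K$ admitting $a$ as a pseudo limit but having no pseudo limit in $K$, which exists precisely because $v(a-K)$ has no maximal element. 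Expanding the minimal polynomial $f$ of $a$ formally in powers of $X-a$ (so that no division by integers is required in characteristic $p$) and using $f(a)=0$, one checks that $v(f(a_\nu))$ is strictly increasing; thus $(a_\nu)$ is of algebraic type in the sense of \cite{[Ka]}, and the minimal degree of a polynomial $q\in K[X]$ with $v(q(a_\nu))$ eventually strictly increasing (a \emph{witness}) is at most $\deg f=p$.

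The crucial step, which I expect to be the main obstacle, is to rule out a witness of degree $<p$, i.e.\ to show that the minimal witness degree equals $p$. Suppose it were some $m<p$. By the theory of \cite{[Ka]} a monic witness $q$ of minimal degree is irreducible over the henselian field $K$, each root $a'$ of $q$ is a pseudo limit of $(a_\nu)$, and $K(a')|K$ is immediate of degree $m$. As for $a$, the absence of a pseudo limit of $(a_\nu)$ in $K$ forces $v(a'-K)$ to have no maximal element, so $a'$ is weakly immediate over $K$, and in particular $a'\notin K$. Because $K$ is henselian, $K(a')|K$ is a uv--extension; if it were defectless, Lemma~\ref{2.7}a) would again produce a maximal element of $v(a'-K)$, a contradiction, so it is a defect extension. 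Its defect is then a power of $p$ greater than $1$ dividing $[K(a'):K]=m<p$, which is impossible. Hence $m=p$, and consequently $v(g(a_\nu))$ is eventually constant for every $g\in K[X]$ with $\deg g<p$.

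It remains to convert this into strong immediacy. Fix $g\in K[X]$ with $1\le\deg g<p$ (the constant case being trivial). Since $\deg g<\deg f$ and $f$ is irreducible we have $g(a)\neq0$, and writing $g(c)=g(a)+\sum_{i\ge1}c_i(c-a)^i$ with $c_i\in K(a)$, the statement that $v(g(a_\nu))$ is eventually constant says exactly that there is a value $\alpha\in v(a-K)$ with $v(c_i)+i\alpha>v(g(a))$ for all $i\ge1$. Then for every $c\in K$ with $v(a-c)\ge\alpha$ each term satisfies $v(c_i(c-a)^i)=v(c_i)+i\,v(a-c)\ge v(c_i)+i\alpha>v(g(a))$, so $v(g(c))=v(g(a))$ is fixed, which is the required condition. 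The delicate point, to be verified with care, is that this equivalence must be read as the existence of a \emph{single} $\alpha$ dominating \emph{all} Taylor terms simultaneously; phrasing it this way is exactly what makes the argument go through even when $\dist(a,K)$ is bounded, the case in which one cannot simply let $v(a-c)$ tend to infinity.
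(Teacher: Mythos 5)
Your proof is correct, but it takes a genuinely different route from the paper's in both halves, so it is worth comparing. For immediacy, the paper simply cites Lemma~9 of \cite{[Ku3]}, whereas you argue self-containedly: defectlessness would contradict weak immediacy by Lemma~\ref{2.7}a), and then the Lemma of Ostrowski forces the defect to equal the prime degree, whence $e=f=1$. (One wording caveat: a priori the defect is a power of the \emph{characteristic exponent} of $Kv$, not of $p$; since it divides $p$ and exceeds $1$ it equals $p$, and only then does $p=\chara Kv$ follow --- exactly as the paper deduces via Lemma~\ref{imuv}.) For strong immediacy, both you and the paper first reduce to the henselization via Lemmas~\ref{uvdisj}, \ref{hensdef} and \ref{dKdKh}, but then the arguments diverge: the paper takes a bad polynomial of minimal degree over $K^h$, invokes \cite[Proposition~6.5]{[KuV]} to force that degree to be $1$, and refutes degree $1$ by the ultrametric equality $v(c-b)=v(a-b)$; you instead run Kaplansky's pseudo Cauchy sequence theory, showing that a witness of minimal degree $m<p$ would produce, by Theorem~3 of \cite{[Ka]}, a weakly immediate generator of an extension of degree $m$ over $K$, which your defect argument excludes for $1<m<p$ while the absence of pseudo limits in $K$ excludes $m=1$. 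In effect you re-prove exactly the special case of \cite[Proposition~6.5]{[KuV]} that the paper cites as a black box. Your route buys self-containedness (only the classical \cite{[Ka]} plus the paper's own lemmas), at the cost of more hand-rolled machinery, and two steps deserve to be made explicit: (i) Kaplansky's theorem yields \emph{some} valuation on $K(a')$ under which the extension is immediate and $a'$ is a pseudo limit, and you need henselianity of $K$ to identify it with the fixed $v$ --- the same trick used in the paper's proof of Lemma~\ref{wisi}; (ii) your equivalence between ``$vg(a_\nu)$ eventually constant'' and ``some $\alpha\in v(a-K)$ dominates all Taylor terms'' requires that for large $\nu$ the minimum of the values $vc_i+i\gamma_\nu$ is attained at a unique index, which holds because these are affine functions of $\gamma_\nu$ with pairwise distinct slopes. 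Both points are standard and you signal awareness of them, so I regard them as expository, not as gaps.
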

\begin{proof}
By~\cite[Lemma~9]{[Ku3]}, $(K(a)|K,v)$ is immediate. Note that by Lemma~\ref{imuv}, $p = \chara Kv > 0$.

Suppose that there is a polynomial $g\in K[X]$ of degree $<p$ for which
there is no $\alpha\in v(a-K)$ such that the value $vg(c)$ is fixed for
all $c\in K$ with $v(a-c) \geq\alpha$. Since $v(a-K)=v(a-K^h)$ by (\ref{vx-Khens}), there is no $\alpha\in v(a-K^h)$
such that the value $vg(c)$ is fixed for all $c\in K^h$ with $v(a-c) \geq\alpha$. Take $f\in K^h[X]$ to be of
minimal degree with this property. As $\deg f\leq\deg g<p$, it follows from \cite[Proposition~6.5]{[KuV]} that
$\deg f=1$. Hence $f(X)=X-b$ for some $b\in K^h$.

Since $v(a-K^h)=v(a-K)$ has no maximal element, we can choose some $\alpha\in v(a-K^h)$ with $\alpha> v(a-b)$.
Take any $c\in K^h$ such that
$v(a-c)\geq\alpha$. Then $v(c-b)=\min\{v(c-a),v(a-b)\}=v(a-b)$, so the value $vf(c)$ is fixed for all such $c$.
This contradicts our choice of $f$ and shows that a polynomial $g$ as chosen in the beginning cannot exist.
\end{proof}

\begin{lemma}                                 \label{wisi}
Take a henselian field $(K,v)$ and an element $a\in \tilde{K}$ which is weakly immediate over $K$. If $a$ is not
strongly immediate over $K$, then
there is an immediate extension $(K(b)|K,v)$ with $\dist(b,K)=\dist(a,K)$ and $[K(b):K]<[K(a):K]$.
\end{lemma}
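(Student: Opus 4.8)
The plan is to manufacture, from the failure of strong immediacy, an irreducible polynomial over $K$ having a root $b$ that sits above the cut $\dist(a,K)$, and then to recognize $b$ as strongly immediate over $K$, so that the immediacy of $(K(b)|K,v)$ follows from Lemma~\ref{si}. Since $a$ is weakly but not strongly immediate, there is a polynomial $g\in K[X]$ with $\deg g<[K(a):K]$ for which no $\alpha\in v(a-K)$ makes $vg(c)$ constant on $\{c\in K\mid v(a-c)\geq\alpha\}$; I will call such a polynomial \emph{bad} and the others \emph{good} (scaling by a nonzero constant of $K$ shifts $vf(c)$ by a constant and preserves this property, so I may work with monic polynomials). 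Because $v(a-K)$ is totally ordered, any two good polynomials admit a common stabilizing threshold, so a product of good polynomials is again good. Hence, fixing a bad polynomial $f$ of minimal degree, a proper factorization of $f$ would exhibit two good factors of smaller degree whose product $f$ would be good; therefore $f$ is irreducible. (A linear polynomial $X-c_0$ has its root $c_0\in K$ and is trivially good, so in fact $\deg f\geq 2$, in contrast with the henselization argument of Lemma~\ref{EF}; but only $\deg f<[K(a):K]$ will be used.)

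The heart of the matter is to show that $f$ has a root above the cut. Writing $f(X)=\prod_i(X-b_i)$ with $b_i\in\tilde K$, for $c\in K$ we have $vf(c)=\sum_i v(c-b_i)$. For each root, either $v(a-b_i)\leq v(a-c_0)$ for some $c_0\in K$, in which case $v(c-b_i)=v(a-b_i)$ is constant for all $c$ with $v(a-c)>v(a-b_i)$; or $v(a-b_i)>v(a-c)$ for every $c\in K$, in which case $v(c-b_i)=v(a-c)$, a quantity that is \emph{not} eventually constant precisely because $v(a-K)$ has no maximal element. Summing, $f$ is bad if and only if some root $b:=b_i$ satisfies $v(a-b)>v(a-K)$. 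This dichotomy is the step I expect to require the most care: one must check that there is no intermediate behaviour at the cut itself, and that a single non-stabilizing summand cannot be cancelled by the others, so that the non-stabilization of $vf$ genuinely forces a pseudo-limit among the roots.

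With such a root $b$ in hand the conclusion follows quickly. From $v(a-b)>v(a-c)$ for all $c\in K$ one gets $v(b-c)=v(a-c)$, hence $v(b-K)=v(a-K)$ and $\dist(b,K)=\dist(a,K)$; in particular $v(b-K)$ has no maximal element. Since $f$ is irreducible with $f(b)=0$, it is the minimal polynomial of $b$, so $[K(b):K]=\deg f\leq\deg g<[K(a):K]$. Finally, any $h\in K[X]$ with $\deg h<\deg f=[K(b):K]$ is good for $a$ by minimality of $f$, and because $v(b-c)=v(a-c)$ for every $c\in K$ the very same threshold $\alpha\in v(b-K)=v(a-K)$ shows that $vh(c)$ is fixed for all $c$ with $v(b-c)\geq\alpha$; hence $b$ is strongly immediate over $K$, and Lemma~\ref{si} yields that $(K(b)|K,v)$ is immediate, as required. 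The hypothesis that $K$ is henselian is what lets this computation be carried out inside $K$ itself rather than passing to the henselization $K^h$.
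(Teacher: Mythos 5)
Your proof is correct, and it takes a genuinely different route from the paper's. The paper's proof runs through the approximation-type machinery of \cite{[KuV]}: since $v(a-K)$ has no maximal element, the approximation type $\appr(a,K)$ is immediate; failure of strong immediacy means an associated minimal polynomial $g$ of $\appr(a,K)$ has degree $<[K(a):K]$; for a root $b$ of $g$, \cite[Theorem~6.4]{[KuV]} produces \emph{some} extension $w$ of $v$ to $K(b)$ with $(K(b)|K,w)$ immediate and $\appr(b,K)=\appr(a,K)$, and henselianity is invoked precisely to force $w=v$. You instead reprove the relevant piece of that machinery from scratch: your minimal bad polynomial is exactly the associated minimal polynomial, and your root-factorization computation over $\tilde{K}$ replaces the citation. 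The step you flagged as delicate is sound as you set it up: the two cases ($v(a-b_i)\leq v(a-c_0)$ for some $c_0\in K$, versus $v(a-b_i)>v(a-c)$ for all $c\in K$) are exhaustive, a common threshold in $v(a-K)$ dominating the finitely many roots of the first kind exists because $v(a-K)$ has no maximal element, and beyond it $vf(c)=C+k\cdot v(a-c)$ with $k\geq 1$, which cannot stabilize since $\widetilde{vK}$ is torsion-free; so non-stabilization of $vf$ does force a root with $v(a-b)>v(a-K)$. Your route buys two things: the stronger conclusion that $b$ is strongly immediate over $K$ with $v(b-K)=v(a-K)$, and---contrary to your own closing remark---independence from henselianity: your $b$ is found inside $(\tilde{K},v)$ with the globally fixed extension of $v$, and Lemma~\ref{si} requires no henselianity, so your argument proves the lemma for arbitrary $(K,v)$ relative to that fixed extension, whereas henselianity is needed only on the paper's route, to identify the abstractly constructed valuation $w$ with $v$. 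What the paper's proof buys in exchange is brevity, by delegating exactly the work you did by hand to the results of \cite{[KuV]}.
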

\begin{proof}
Using the notions of \cite{[KuV]}, we argue as follows. Since $v(a-K)$ has no maximal element, the approximation
type $\appr(a,K)$ is immediate by \cite[Lemma~4.1 a)]{[KuV]}. Take $g$ to be an associated minimal polynomial for
$\appr(a,K)$. Since the extension $(K(a)|K,v)$ is not strongly immediate, we have that $\deg g<[K(a):K]$. Take
$b\in \tilde{K}$ to be a root of $g$. Then \cite[Theorem~6.4]{[KuV]} shows that there is an extension $w$ of $v$
from $K$ to $K(b)$ such that $(K(b)|K,w)$ is immediate and $\appr(b,K)=\appr(a,K)$. Since $(K,v)$ is henselian, $w$
and $v$ must agree on $K(b)$, showing that $(K(b)|K,v)$ is immediate. The equality of the approximation types
implies that $v(b-K)=v(a-K)$, which in turn implies that $\dist(b,K)=\dist(a,K)$.
\end{proof}

%
%ÄÄÄÄÄÄÄÄÄÄÄÄÄÄÄÄÄÄÄÄÄÄÄÄÄÄÄÄÄÄÄÄÄÄÄÄÄÄÄÄÄÄÄÄÄÄÄÄÄÄÄÄÄÄÄÄÄÄÄÄÄÄÄÄÄÄÄÄÄÄÄ
%
\subsection{The ramification field}
For general ramification theory, see \cite{[E--P]} or \cite{[Ku8]}.
For information on tame valued fields, see \cite{[Ku7]}. We
will summarise here the main properties of the ramification field that we will use.

Let $(N|K,v)$ be a normal algebraic extension of henselian fields. We take
the \bfind{ramification field} $V$ of this extension to be the fixed field of the ramification group
$\{\sigma\in \mbox{\rm Aut}(N|K)\mid 0\ne x\in {\mathcal O}_L\,\Rightarrow\, v(\sigma x - x)>vx\}$
of the automorphism group of $N|K$ {\it in the maximal separable subextension of $N|K$}.

The \bfind{absolute ramification field} of a henselian field $(K,v)$ is the ramification field of the
normal algebraic extension $(K\sep|K,v)$, where $K\sep$ denotes the separable-algebraic closure of $K$.

\begin{lemma}                               \label{overr}
Take a normal extension $(N|K,v)$ of henselian fields with residue
characteristic $p>0$. Then its ramification field $V$ has the following properties:
\sn
a) The extension $V|K$ is separable.
\n
b) Every subextension of $N|V$ is a tower of normal extensions of degree $p$.
\n
c) The valued field extension $(V|K,v)$ is tame and hence every  finite subextension $(E|K,v)$ of $(V|K,v)$ is defectless.
\n
d) For every finite subextension $L|K$ of $N|K$,
\[
d(L|K,v)\>=\>d(L.V|V,v)\>.
\]
e) For all $a\in N \setminus K$ weakly immediate over $K$,
\[
\dist(a,V)\>=\>\dist(a,K)\;.
\]
\end{lemma}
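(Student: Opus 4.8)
The plan is to dispatch (a), (c) and (b) using the classical structure theory of the ramification field, and to spend the real effort on (d) and (e). Assertion (a) is immediate from the definition: $V$ is by construction a fixed field inside the maximal separable subextension $N_0$ of $N|K$, so $V\subseteq N_0$ and hence $V|K$ is separable. For (c) I would recall the standard description of the inertia field $T$ and the ramification field $V$ of a normal extension of henselian fields with residue characteristic $p>0$ (see \cite{[E--P],[Ku8]}): one has $vT=vK$ with $Tv|Kv$ separable, while $Vv=Tv$ and $(vV:vT)$ is prime to $p$. Consequently $(vV:vK)$ is prime to $p$ and $Vv|Kv$ is separable, which is precisely the statement that $(V|K,v)$ is tame in the sense of \cite{[Ku7]}; since finite tame extensions are defectless, every finite subextension $(E|K,v)$ of $(V|K,v)$ is defectless.

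For (b) the one nontrivial input is again ramification-theoretic, namely that the ramification group $G^r=\Gal(N_0|V)$ is a pro-$p$ group. Given an intermediate field $V\subseteq E\subseteq N$, I would split $E|V$ into its maximal separable subextension $E_{\mathrm{sep}}$ and the purely inseparable extension $E|E_{\mathrm{sep}}$. As $V\subseteq N_0$, the field $E_{\mathrm{sep}}$ lies in $N_0$ and so corresponds under the Galois correspondence to a closed subgroup of the pro-$p$ group $G^r$; using the standard fact that in a pro-$p$ group every closed subgroup can be joined to the whole group by a chain of subgroups, each normal of index $p$ in the next, one obtains a tower $V=E_0\subset\cdots\subset E_m=E_{\mathrm{sep}}$ of normal degree-$p$ extensions. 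The purely inseparable part $E|E_{\mathrm{sep}}$ is then refined into a tower of degree-$p$ purely inseparable steps, each automatically normal, and concatenating the two towers yields (b).

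Part (d) I would derive from the multiplicativity of the defect in towers together with the tameness from (c), via the principle that a tame base change leaves the defect of a finite extension unchanged. Since $V|K$ may be infinite, I first descend: as $L.V|V$ is finite, choose a finite subextension $V_0|K$ of $V|K$ with $L.V=(L.V_0).V$ and $[L.V_0:V_0]=[L.V:V]$, so that $L.V_0$ and $V$ are linearly disjoint over $V_0$. Because $K$ is henselian all the extensions involved are uv--extensions, and both $V_0|K$ and $V|V_0$ are tame by (c). Invariance of the defect under the tame base change $V_0|K$ gives $d(L|K,v)=d(L.V_0|V_0,v)$ --- in the finite case this is a direct consequence of the fundamental equality~(\ref{feq}) and the defectlessness in (c) --- while invariance under the tame base change $V|V_0$ applied to the finite extension $L.V_0|V_0$ gives $d(L.V_0|V_0,v)=d(L.V|V,v)$; combining the two equalities proves (d).

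The main obstacle is (e), where the finite defectless distance equality of Lemma~\ref{2.7}b) must be pushed up to the possibly infinite tame field $V$. By Lemma~\ref{distdist} applied to $K\subseteq V$ we always have $\dist(a,K)\le\dist(a,V)$, so it suffices to exclude strict inequality. Assume $\dist(a,K)<\dist(a,V)$; then Lemma~\ref{distdist} produces an element $b\in V$ with $v(a-b)>v(a-K)$. I would then fix a finite subextension $V_0|K$ of $V|K$ with $b\in V_0$; by (c) this $V_0|K$ is a finite defectless uv--extension, so Lemma~\ref{2.7}b) yields $\dist(a,V_0)=\dist(a,K)$. But $b\in V_0$ satisfies $v(a-b)>v(a-c)$ for every $c\in K$, and $v(a-b)\in vN\subseteq\widetilde{vK}$; hence $v(a-b)$ lies in the lower cut set of $\dist(a,V_0)$ while lying in the upper cut set of $\dist(a,K)$, contradicting $\dist(a,V_0)=\dist(a,K)$. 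Therefore $\dist(a,V)=\dist(a,K)$. The delicate point throughout is the reduction of any single witnessing element to a finite subextension of $V|K$, after which everything rests on the finite results already established.
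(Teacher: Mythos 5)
There is a genuine gap, and it sits in your part c), which is exactly the load-bearing part: both your d) and your e) consume the defectlessness asserted there. You claim that the classical facts ``$(vV:vK)$ prime to $p$ and $Vv|Kv$ separable'' are precisely the statement that $(V|K,v)$ is tame in the sense of \cite{[Ku7]}, and then conclude defectlessness of finite subextensions because finite tame extensions are defectless. This is circular. In \cite{[Ku7]}, tameness of an algebraic extension of a henselian field is defined by three conditions on every finite subextension $(E|K,v)$: ramification index prime to $p$, separable residue field extension, \emph{and} defectlessness. The first two conditions do not imply the third: every immediate extension satisfies them with ramification index $1$ and trivial residue field extension, yet immediate extensions of degree $p$ with defect $p$ (Artin--Schreier defect extensions, purely inseparable defect extensions) are precisely what this paper is about. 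So the defectlessness in c) cannot be extracted from the value-group and residue-field data you list; it is the nontrivial content of the assertion. The paper gets it by observing that $V$ is contained in the absolute ramification field $K^r$ and quoting \cite[Lemma~2.13]{[Ku7]}, which establishes as a theorem (in essence, the fundamental equality for subextensions of the ramification field) that $K^r|K$ is tame, whence so is its subextension $(V|K,v)$.

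Your d) has a related overstatement. Granting c), multiplicativity of the defect along $K\subseteq L\subseteq L.V_0$ and $K\subseteq V_0\subseteq L.V_0$ gives $d(L.V_0|V_0,v)=d(L.V_0|L,v)\cdot d(L|K,v)$, so the equality $d(L|K,v)=d(L.V_0|V_0,v)$ that you want requires $d(L.V_0|L,v)=1$, i.e., that $V_0|K$ remains defectless after base change to $L$. That is not a direct consequence of the fundamental equality together with the defectlessness from c); defectlessness of $V_0|K$ by itself says nothing about $L.V_0|L$. It is the substance of \cite[Proposition~2.8]{[Ku3]}, which the paper cites (alternatively one can argue $L.V_0\subseteq L.K^r\subseteq L^r$ and use tameness of $L^r|L$), so this invariance statement must be quoted or proved, not waved through. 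By contrast, your a), b) and e) are sound and essentially the paper's own arguments; in e), your reduction of the witness $b\in V$ to a finite subextension $V_0$ followed by Lemma~\ref{2.7}~b) is a cosmetic variant of the paper's use of $K(b)|K$ and Lemma~\ref{2.7}~a). Once c) is repaired along the lines above, the rest of your proof goes through.
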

\begin{proof}
Assertion a) follows from our definition.

Assertion b) follows from the
fact that the ramification group is a $p$-group (cf.\ \cite[Theorem 5.3.3]{[E--P]} and the proof of
\cite[Lemma~2.9]{[Ku3]}).

For assertion c), note that $V$ is a subfield of the absolute ramification field $K^r$
of $(K,v)$, which by part b) of \cite[Lemma 2.13]{[Ku7]} is a tame extension of $(K,v)$. Hence by part a) of the
same lemma, also $V$ is a tame extension of $(K,v)$. Thus every finite subextension $(E|K,v)$ of the tame extension
$(V|K,v)$ is defectless. In view of this, the equality of the defects follows from \cite[Proposition~2.8]{[Ku3]}.

For the proof of d) suppose that $\dist(a,V)>\dist(a,K)$. Then by Lemma~\ref{distdist} there is an element $b\in V$
such that $\dist(a,K)=\dist(b,K)$. On the other hand, $(K(b)|K,v)$ is a defectless uv--extension, by part c).
Together with part a) of Lemma~\ref{2.7} this contradicts the fact that $a$ is weakly immediate over $K$.
\end{proof}

\bn
%
%
%ÄÄÄÄÄÄÄÄÄÄÄÄÄÄÄÄÄÄÄÄÄÄÄÄÄÄÄÄÄÄÄÄÄÄÄÄÄÄÄÄÄÄÄÄÄÄÄÄÄÄÄÄÄÄÄÄÄÄÄÄÄÄÄÄÄÄÄÄÄÄÄ
%
\section{The number of distinct distances in a given valued field extension}   \label{sectdistLK}
Take a finite (not necessarily immediate) uv--extension $(L|K,v)$.
We wish to count the number of distances appearing in this extension
that are distinct modulo $vK$.
We define
\[
\ndd(L|K,v)
\]
to be the minimal $m\geq 0$ such
that there are elements $a_1,\ldots,a_m\in L\setminus K$ so that each
$a_i$ is weakly immediate over $K$ and for every $b\in L\setminus K$ for which
$v(b-K)$ has no maximal element, there is $i\in \{1,\ldots,m\}$ and
$\alpha\in vK$ with
\[
\dist(b,K)\>=\>\alpha+\dist(a_i,K)\>,
\]
that is, $\dist(b,K)$ and $\dist(a_i,K)$ are equal modulo $vK$.
If there is no such $b$ (which in particular is the case when $(L|K,v)$ is defectless, according to part a) of
Lemma~\ref{2.7}), then we set $\ndd(L|K,v)=0$. We will see that such a number $m$ always exists.

Similarly,
\[
\ndd^*(L|K,v)
\]
shall denote the number of distances appearing in $(L|K,v)$ that are distinct modulo $\widetilde{vK}$. Observe that
\begin{equation}
\ndd^*(L|K,v)\>\leq\>\ndd(L|K,v)\;\mbox{ and }\;\ndd^*(L|K,v)=0\>\Leftrightarrow\>\ndd(L|K,v)=0 \>.
\end{equation}

We note:
\begin{lemma}                  \label{ndd0}
Take any algebraic extension $(L|K,v)$ of valued fields with subextension $(L_0|K,v)$. Then $\ndd(L_0|K,v)=0$ if
and only if $\dist(a,K)=\dist(a,L_0)$ for every $a\in L$ which is weakly immediate over $K$.
\end{lemma}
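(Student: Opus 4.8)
The plan is to translate the combinatorial definition of $\ndd$ into a statement about weakly immediate elements and then to play it off against the distance comparison in Lemma~\ref{distdist}. First I would reformulate the left-hand side: by the definition of $\ndd$, one has $\ndd(L_0|K,v)=0$ exactly when the empty family of representatives works, and this happens precisely when no $b\in L_0\setminus K$ has $v(b-K)$ without a maximal element, i.e.\ when no element of $L_0\setminus K$ is weakly immediate over $K$. I would also record at the outset that, since $(L_0|K,v)$ is algebraic, $\widetilde{vL_0}=\widetilde{vK}$, so $\dist(a,K)$ and $\dist(a,L_0)$ are cuts in one and the same group and may be compared; and that for any $b\in L_0$ the set $v(b-L_0)\cap\widetilde{vL_0}$ equals $vL_0$, which is cofinal in $\widetilde{vK}$, so that $\dist(b,L_0)$ is the maximal cut.

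To show that the distance equality forces $\ndd(L_0|K,v)=0$, I would argue contrapositively. Suppose some $b\in L_0\setminus K$ is weakly immediate over $K$. Then $b\in L$ is weakly immediate over $K$, so the hypothesis applied to $a=b$ would give $\dist(b,K)=\dist(b,L_0)$. But $\dist(b,K)$ is a proper cut, because $v(b-K)$ has no maximal element, while $\dist(b,L_0)$ is the maximal cut by the remark above; this contradiction shows that no such $b$ exists, that is, $\ndd(L_0|K,v)=0$.

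For the reverse implication I would assume $\ndd(L_0|K,v)=0$ and take an arbitrary $a\in L$ weakly immediate over $K$. Applying Lemma~\ref{distdist} to the algebraic extensions $(L_0|K,v)$ and $(L_0(a)|L_0,v)$ yields $\dist(a,K)\leq\dist(a,L_0)$; assuming strict inequality, the same lemma produces $b\in L_0$ with $v(a-b)>v(a-K)=v(b-K)$ and $\dist(b,K)=\dist(a,K)$. The step that needs care is to confirm that this $b$ genuinely contradicts $\ndd(L_0|K,v)=0$: since $v(b-K)=v(a-K)$ has no maximal element, $b$ is weakly immediate over $K$, hence $b\notin K$, so $b\in L_0\setminus K$ is weakly immediate over $K$ --- impossible under our assumption. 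Therefore the inequality cannot be strict and $\dist(a,K)=\dist(a,L_0)$. The only real obstacle in the whole argument is exactly this bookkeeping on the auxiliary element $b$ delivered by Lemma~\ref{distdist} (together with the boundary observation that $\dist(b,L_0)$ is the maximal cut when $b\in L_0$); everything else is just unwinding the definition of $\ndd$.
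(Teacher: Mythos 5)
Your proof is essentially the paper's own: the same reading of $\ndd(L_0|K,v)=0$ as ``no element of $L_0\setminus K$ is weakly immediate over $K$'', the same application of Lemma~\ref{distdist} in the direction from $\ndd(L_0|K,v)=0$ to the distance equality, and the same contrapositive argument for the converse. In the first of these directions you are in fact more careful than the paper: its phrase ``but then $v(a-K)$ has no maximal element'' is precisely your bookkeeping that $v(b-K)=v(a-K)$ has no maximal element, so that $b\in L_0\setminus K$ is weakly immediate over $K$.

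In the converse direction, however, the justification you add contains an inference that fails as stated --- and it is exactly the point that the paper's own proof glosses over. You assert that $\dist(b,K)$ is a proper cut ``because $v(b-K)$ has no maximal element''. Under the literal definition of distance used here this implication is false: a subset of $\widetilde{vK}$ without a maximal element can still be cofinal in $\widetilde{vK}$, and then the smallest initial segment containing it is all of $\widetilde{vK}$, so that $\dist(b,K)$ is itself the maximal cut and coincides with $\dist(b,L_0)$. This happens whenever $b$ lies in the completion of $K$; for instance, take $K=\F_p(t,x^p)$ with the $t$-adic valuation, where $x\in\F_p[[t]]$ is transcendental over $\F_p(t)$, and $b=x\in L_0=K(x)$. (In this example every weakly immediate $a\in L_0$ has both $\dist(a,K)$ and $\dist(a,L_0)$ equal to the maximal cut, while $\ndd(L_0|K,v)=1$, so with the literal definition the equivalence itself breaks down.) The paper's one-line version of the same step, ``Since $a\in L_0$, it follows that $\dist(a,L_0)>\dist(a,K)$'', suffers from the same problem. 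Both arguments are sound only under the convention that the distance of an element of $L_0$ over $L_0$ is $\infty$, a value strictly above every cut in $\widetilde{vK}$ rather than the maximal cut of $\widetilde{vK}$; with that convention, the correct reason why $\dist(b,K)\neq\dist(b,L_0)$ is that $b\notin K$, hence $\infty\notin v(b-K)$ --- not the absence of a maximal element in $v(b-K)$.
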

\begin{proof}
Assume first that $\ndd(L_0|K,v)=0$ and take $a\in L\setminus K$ weakly immediate over $K$. If $\dist(a,K)\ne
\dist(a,L_0)$, then $\dist(a,L_0)>\dist(a,K)$ and by Lemma~\ref{distdist} there is $b\in L_0$ such that
$\dist(a,K)=\dist(b,K)$. But then $v(a-K)$ has no maximal element, contradicting our assumption that
$\ndd(L_0|K,v)=0$.

Now assume that $\ndd(L_0|K,v)>0$ and take $a\in L_0\setminus K$ weakly immediate over $K$. Since $a\in L_0\,$,
it follows that $\dist(a,L_0)>\dist(a,K)$.
\end{proof}

\begin{lemma}                           \label{nddKK'}
Take a finite uv--extension $(L|K,v)$ and an algebraic extension $(K'|K,v)$ such that
$(vK':vK)<\infty$ and $\dist(a,K)=\dist(a,K')$ for all $a\in L$. Then
\begin{eqnarray*}
\ndd(L|K,v)&\leq&\ndd(L.K'|K',v)\cdot (vK':vK)\>,\\
\ndd^*(L|K,v)&\leq&\ndd^*(L.K'|K',v)\>.
\end{eqnarray*}
\end{lemma}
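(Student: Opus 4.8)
The plan is to count distance classes directly. Since $(vK':vK)<\infty$, the divisible hulls coincide, $\widetilde{vK}=\widetilde{vK'}$, so all distances in question are cuts in one and the same ordered abelian group. Let $D_K$ be the set of distances $\dist(b,K)$ with $b\in L\setminus K$ weakly immediate over $K$, taken up to translation by $vK$, and $D_{K'}$ the set of distances $\dist(b',K')$ with $b'\in L.K'\setminus K'$ weakly immediate over $K'$, taken up to translation by $vK'$. By the very definition of $\ndd$ as the minimal number of representatives, $\ndd(L|K,v)$ is the number of distinct classes in $D_K$ and $\ndd(L.K'|K',v)$ that in $D_{K'}$; analogously $\ndd^*$ counts classes modulo $\widetilde{vK}=\widetilde{vK'}$.

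First I would check that every $b\in L\setminus K$ weakly immediate over $K$ is, as an element of $L.K'$, weakly immediate over $K'$, so that it genuinely contributes to $D_{K'}$. Since $b$ is algebraic over $K'$ of finite degree, $v(b-K')\subseteq vK'(b)\subseteq\widetilde{vK'}=\widetilde{vK}$, and likewise $v(b-K)\subseteq\widetilde{vK}$; hence each distance cut is simply the down-closure of the corresponding approximation set. By hypothesis $\dist(b,K)=\dist(b,K')$. Because the down-closure of a set has a maximum exactly when the set itself does, and $v(b-K)$ has no maximum, the lower cut set of $\dist(b,K)=\dist(b,K')$ has no maximum; therefore $v(b-K')$ has no maximum and $b\notin K'$. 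This is the step I expect to carry the real weight: it is precisely where the hypothesis $\dist(a,K)=\dist(a,K')$ for $a\in L$ is used, and it is what guarantees that passing from $K$ to $K'$ does not destroy weak immediacy.

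With this in hand, the assignment $b\mapsto\dist(b,K')=\dist(b,K)$ induces a well-defined map $\Phi\colon D_K\to D_{K'}$ sending the $vK$-class of $\dist(b,K)$ to the $vK'$-class of $\dist(b,K')$: if $\dist(b_1,K)=\alpha+\dist(b_2,K)$ with $\alpha\in vK\subseteq vK'$, then the images agree modulo $vK'$. To bound a fibre of $\Phi$, fix a cut $\delta$ representing a class in $D_{K'}$. Any $b$ with $\Phi([\dist(b,K)])=[\delta]$ satisfies $\dist(b,K)=\dist(b,K')=\gamma+\delta$ for some $\gamma\in vK'$, and the $vK$-class of $\dist(b,K)$ is then determined by the coset $\gamma+vK\in vK'/vK$. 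Since there are exactly $(vK':vK)$ such cosets, each fibre contains at most $(vK':vK)$ classes of $D_K$. Summing over $D_{K'}$ gives $\ndd(L|K,v)=|D_K|\leq|D_{K'}|\cdot(vK':vK)=\ndd(L.K'|K',v)\cdot(vK':vK)$, the first inequality.

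For the second inequality I would run the same construction with $\widetilde{vK}=\widetilde{vK'}$ in place of $vK$ and $vK'$. Now $\dist(b,K')=\dist(b,K)$ as actual cuts and the two moduli coincide, so the induced map on classes modulo $\widetilde{vK}$ is not merely finite-to-one but injective: distinct classes on the left map to distinct classes on the right. Injectivity yields $\ndd^*(L|K,v)\leq\ndd^*(L.K'|K',v)$, completing the proof.
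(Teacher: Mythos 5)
Your counting argument is exactly the paper's: the paper likewise fixes coset representatives $\beta_1,\ldots,\beta_n$ of $vK'/vK$ and observes that at most $n=(vK':vK)$ distances that are distinct modulo $vK$ can fall into a single class modulo $vK'$, and it obtains the second inequality from the fact that $vK'\subseteq\widetilde{vK}$, which is your injectivity remark. Where you go beyond the paper is the preliminary step: you try to \emph{prove} that each $b\in L\setminus K$ weakly immediate over $K$ stays weakly immediate over $K'$ and lies outside $K'$, so that $\dist(b,K')$ really is one of the distances counted by $\ndd(L.K'|K',v)$. The paper's proof passes over this point in silence.

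Unfortunately, your justification of that step — the one you yourself single out as carrying the real weight — does not go through. The lower cut set of $\dist(b,K')$ is the down-closure of $v(b-K')\cap\widetilde{vK'}$, not of $v(b-K')$: the value $\infty=v(b-b)$, which belongs to $v(b-K')$ precisely when $b\in K'$, is discarded before the cut is formed (note that your inclusion $v(b-K')\subseteq vK'(b)$ already presupposes $b\notin K'$, so the argument is circular at this point). From $\dist(b,K)=\dist(b,K')$ you may therefore only conclude that $v(b-K')\cap\widetilde{vK'}$ has no maximum; the case $b\in K'$ is not excluded, and it cannot be excluded from the stated hypotheses. Indeed, if $\dist(b,K)$ is the cut whose lower set is all of $\widetilde{vK}$ (for instance when $b$ lies in the completion of $K$), then $b\in K'$ is compatible with the hypothesis, and such a $b$ contributes to $\ndd(L|K,v)$ but to nothing on the right-hand side. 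Concretely, take $K=\mathbb{F}_p(t,u^p)$ with the $t$-adic valuation, where $u\in\mathbb{F}_p[[t]]$ is transcendental over $\mathbb{F}_p(t)$, and put $L=K'=K(u)$: then $\dist(a,K)=\dist(a,K')$ is the improper top cut for every $a\in L$, so all hypotheses hold, yet $\ndd(L|K,v)\geq 1$ while $\ndd(L.K'|K',v)\cdot(vK':vK)=0$. So the step fails — and with it the lemma read literally; this is a hidden gap in the paper's own proof as well, not a defect of your strategy relative to it. In the paper's applications the gap is harmless: for $K'=K^h$, Lemma~\ref{dKdKh} provides the stronger equality $v(a-K)=v(a-K^h)$, whence $\infty\notin v(a-K^h)$ and weak immediacy transfers; for $K'$ the ramification field, part c) of Lemma~\ref{overr} together with part a) of Lemma~\ref{2.7} shows that no element of $K'$ can be weakly immediate over $K$. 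If you strengthen the hypothesis to $v(a-K)=v(a-K')$ for all $a\in L$ weakly immediate over $K$ — which is what is actually available in the applications — your proof becomes correct verbatim.
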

\begin{proof}
Set $n=(vK':vK)$ and choose representatives $\beta_1,\ldots,\beta_n\in vK'$ of the distinct cosets in $vK'/vK$.
If two distances $\dist(a_1,K)$ and $\dist(a_2,K)$ are equal modulo $vK'$ then there is $i\in\{1,\ldots,n\}$ and
$\alpha\in vK$ such that $\dist(a_1,K)=\alpha+\beta_i+\dist(a_2,K)$, where the latter is equal to
$\beta_i+\dist(a_2,K)$ modulo $vK$. This shows that the maximum number of distances that are distinct modulo $vK$
but equal modulo $vK'$ is $n$, which proves the first inequality.

The second inequality follows from the fact that all $\beta_i$ lie in $\widetilde{vK}$.
\end{proof}

The next lemma computes $\ndd(K(a)|K,v)$ for uv--extensions
$(K(a)|K,v)$ with $a$ strongly immediate over $K$. We derive it from \cite[Lemma~8]{[Ka]} and
\cite[Lemma~5.2]{[KuV]}. We use the Taylor expansion
\begin{equation}                             \label{Taylorexp}
f(X) = \sum_{i=0}^{n} f_i(c) (X-c)^i
\end{equation}
where $f_i$ denotes the $i$-th formal derivative of $f$.

\begin{lemma}                                \label{Kap}
Take a finite uv--extension $(K(a)|K,v)$ such that $a$ is strongly immediate over $K$.
Following Lemma~\ref{si}, we write $[K(a):K]=p^k$ for some $k\geq 1$. Then for every nonconstant polynomial
$f\in K[X]$ of degree $<p^k$ there are $\gamma\in v(a-K)$ and ${\bf h}=p^\ell$ with
$0\leq\ell<k$ such that for all $c\in K$ with $v(a-c)\geq \gamma$, the
value $vf_i(c)$ is fixed for each $i\geq 0$,
\begin{equation}                            \label{Kapvfi}
v(f(a)-f(c)) \>=\> vf_{\bf h}(c) + {\bf h}\cdot v(a-c)\>,
%vf(a) \>=\> vf(c) \> <\> vf_{\bf h}(c) (a-c)^{\bf h}
%\> <\> vf_i(c)(a-c)^i \quad\mbox{ for }1\leq i<n\,,\>i\ne j\;.
\end{equation}
and
\begin{equation}                            \label{Kapvfi2}
\dist(f(a),K)\>=\>vf_{\bf h}(c) + {\bf h}\cdot\dist(a,K)\>.
\end{equation}
Therefore, $\ndd(K(a)|K,v)\leq k$ and, modulo $vK$, all distances are multiples of $\dist(a,K)$ by powers of $p$.
\end{lemma}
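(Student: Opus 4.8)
The plan is to fix a nonconstant $f\in K[X]$ of degree $n<p^k$ and to analyze the Taylor expansion $f(a)-f(c)=\sum_{i=1}^{n}f_i(c)(a-c)^i$ as $c$ runs through $K$ with $v(a-c)$ cofinal in $v(a-K)$. First I would use that each $f_i$ has degree $<p^k=[K(a):K]$, so that $f_i(a)\neq0$ whenever $f_i$ is a nonzero polynomial; strong immediacy of $a$ then provides, for each $i$, a threshold in $v(a-K)$ beyond which $vf_i(c)$ is constant, equal to some $\gamma_i\in vK$. Since there are finitely many indices and $v(a-K)$ has no maximal element, I can pick a single $\gamma\in v(a-K)$ beyond which all the $vf_i(c)=\gamma_i$ are simultaneously fixed. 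I also record that $f(a)\notin K$ (as $1\leq\deg f<[K(a):K]$), so that $f(a)$ is weakly immediate over $K$ and $\dist(f(a),K)$ is a genuine cut; note moreover that all the relevant values lie in $\widetilde{vK}$ since $a,f(a)\in\tilde K$.

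Next I would identify the dominant term. For $v(a-c)=\alpha\geq\gamma$ the $i$-th summand has value $\gamma_i+i\alpha$, a linear function of $\alpha$ with slope $i$; comparing the finitely many such functions, for $\alpha$ large enough the minimum is attained uniquely at the smallest index ${\bf h}=\min\{i\geq1:f_i\neq0\}$, so after enlarging $\gamma$ within $v(a-K)$ I obtain $v(f(a)-f(c))=\gamma_{\bf h}+{\bf h}\,v(a-c)=vf_{\bf h}(c)+{\bf h}\,v(a-c)$, which is (\ref{Kapvfi}). The point I would draw from \cite[Lemma~8]{[Ka]} and \cite[Lemma~5.2]{[KuV]} is that this ${\bf h}$ is a power of $p$: writing $f$ as a polynomial in $X^{p^m}$ with $m$ maximal, all Hasse derivatives $f_i$ with $p^m\nmid i$ vanish identically while $f_{p^m}\neq0$, so ${\bf h}=p^m$; since ${\bf h}\leq n<p^k$ this forces ${\bf h}=p^\ell$ with $0\leq\ell<k$. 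This power-of-$p$ phenomenon, together with the unique domination in residue characteristic $p$, is the heart of the matter, and the step I expect to be the main obstacle; it is what the two cited lemmas encapsulate.

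To pass from values to the distance formula (\ref{Kapvfi2}) I would prove both inequalities. As $\alpha=v(a-c)$ runs cofinally through $v(a-K)$, the values $\gamma_{\bf h}+{\bf h}\alpha$ lie in $v(f(a)-K)$ and are cofinal in $\gamma_{\bf h}+{\bf h}\cdot v(a-K)$, giving $\dist(f(a),K)\geq vf_{\bf h}(c)+{\bf h}\,\dist(a,K)$. For the reverse inequality I would suppose that some $c''\in K$ satisfied $v(f(a)-c'')>\gamma_{\bf h}+{\bf h}\alpha$ for every $\alpha\in v(a-K)$; then for $c$ with $v(a-c)=\alpha$ large I would get $v(f(c)-c'')=\min\{v(f(c)-f(a)),v(f(a)-c'')\}=\gamma_{\bf h}+{\bf h}\alpha$, which is not eventually constant as $\alpha$ increases. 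Since $f-c''$ is again nonconstant of degree $<p^k$, this contradicts the strong immediacy of $a$, so no such $c''$ exists and equality holds in (\ref{Kapvfi2}); here $vf_{\bf h}(c)=\gamma_{\bf h}\in vK$.

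Finally I would read off the counting statement. Every $b\in K(a)\setminus K$ equals $f(a)$ for a unique $f\in K[X]$ with $1\leq\deg f<p^k$, and such $b$ is weakly immediate over $K$ because $(K(a)|K,v)$ is immediate by Lemma~\ref{si}; by (\ref{Kapvfi2}) its distance is, modulo $vK$, equal to $p^\ell\,\dist(a,K)$ for one of the $k$ exponents $\ell\in\{0,\ldots,k-1\}$. Hence at most $k$ distances occur modulo $vK$, giving $\ndd(K(a)|K,v)\leq k$, and each is a multiple of $\dist(a,K)$ by a power of $p$. Thus once the clean value formula (\ref{Kapvfi}) and the identification of ${\bf h}$ as a power of $p$ are in hand, the distance equality and the final count follow routinely, the former through the strong-immediacy contradiction above.
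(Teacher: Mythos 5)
Your overall architecture (thresholds from strong immediacy, a dominant Taylor term, the distance equality via a strong-immediacy contradiction, and the final count) is sensible, and the third and fourth steps are sound. But the core step --- the identification of ${\bf h}$ and the claim that it is a power of $p$ --- is wrong, and this is exactly the point where the paper does not argue elementarily but instead imports the results it cites from \cite{[KuV]} (Lemma 5.2, Proposition 7.4, Lemma 8.2) and \cite{[Ka]}. You claim that for $\alpha=v(a-c)$ ``large enough'' the minimum of the values $\gamma_i+i\alpha$ is attained at the smallest index $i$ with $f_i\neq 0$. Comparing slopes does show that the smallest index wins once $\alpha$ exceeds all crossover values $(\gamma_i-\gamma_j)/(j-i)$, but these crossovers are fixed elements of $\widetilde{vK}$, whereas $v(a-K)$, although it has no maximal element, is in general bounded above by the cut $\dist(a,K)$. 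In a defect extension this cut typically lies \emph{below} the crossovers, so ``$\alpha$ large enough'' never occurs inside $v(a-K)$. What is true and elementary is that the minimizing index is non-increasing as $\alpha$ increases, hence stabilizes to some ${\bf h}$ on a final segment of $v(a-K)$; but that stabilized index need not be the smallest nonzero one, and your argument that it is a power of $p$ (via the maximal $m$ with $f\in K[X^{p^m}]$) collapses together with the identification.

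Concretely: take a tower of two Artin-Schreier defect extensions, $\vartheta^p-\vartheta\in K$ and $a^p-a=\vartheta$, in the standard independent situation where $v(a-c)<0$ for all $c\in K$ (so $\dist(a,K)$ is the lower edge of a convex subgroup). For $f(X)=X^p-X$ one has $f_1\equiv -1\neq 0$ and $f\notin K[X^p]$, so your recipe outputs ${\bf h}=1$ and the formula $v(f(a)-f(c))=vf_1(c)+v(a-c)=v(a-c)$. In fact $f(a)-f(c)=(a-c)^p-(a-c)$, and since $v(a-c)<0$ we get $p\,v(a-c)<v(a-c)$, so $v(f(a)-f(c))=p\,v(a-c)$: the dominant index is ${\bf h}=p$, the \emph{largest} exponent, not the smallest. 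This is consistent with the paper's own later use of the lemma, where in equation (\ref{dist_as_def}) exponents $p^s$ with $s>0$ genuinely occur. So the fact that the stabilized dominant index is always a power of $p$ --- the step where residue characteristic $p$ truly enters, and the step you yourself flag as ``the heart of the matter'' --- is not proved by your argument; it is the nontrivial content that the paper's proof obtains by citation, and your proposal contains no valid proof of it.
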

\begin{proof}
Using the notions of \cite{[KuV]}, the assumption that $a$ is strongly immediate over $K$ is equivalent to
the approximation type of $a$ over $K$ being of degree $[K(a):K]$. Hence all assertions except for the
last one follow from \cite[Lemma 5.2, Proposition 7.4 and Lemma~8.2]{[KuV]} (see also \cite[Lemma~8]{[Ka]}).
For the proof of the last assertion  we use the fact that every element $b\in K(a)\setminus K$ can be written as $f(a)$ with a
nonconstant polynomial $f\in K[X]$ of degree smaller than $[K(a):K]=p^k$. Since there are exactly $k$ many distinct
${\bf h} = p^\ell$ with $0\leq\ell<k$, equation (\ref{Kapvfi2}) yields that $\ndd(K(a)|K,v)\leq k$.
\end{proof}

The following corollary shows that a uv--extension of prime
degree generated by a weakly immediate element admits exactly one distance modulo $vK$. It follows from the
previous lemma together with Lemma~\ref{EF}.

\begin{corollary}                                  \label{idegp}
Take a uv--extension $(K(a)|K,v)$ of prime degree $p$ such that $a$ is weakly
immediate over $K$. Then for every nonconstant polynomial $f\in K[X]$ of degree smaller than $p$ there is
$\gamma\in v(a-K)$ such that for all $c\in K$ with $v(a-c)\geq \gamma$, the value $vf_i(c)$ is fixed for each $i\geq 0$, and
\begin{equation}                            \label{Kapvfi1}
v(f(a)-f(c)) \>=\> vf_1(c) + v(a-c)\>.
%vf(a) \>=\> vf(c) \> <\> vf_{\bf h}(c) (a-c)^{\bf h}
%\> <\> vf_i(c)(a-c)^i \quad\mbox{ for }1\leq i<n\,,\>i\ne j\;.
\end{equation}
Hence for any $b\in K(a)\setminus K$,
\[
\dist(b,K)\>=\>\alpha+\dist(a,K)\quad\mbox{ for some }\alpha\in vK\;.
\]
Therefore, $\ndd^*(K(a)|K,v)=\ndd(K(a)|K,v)=1$.
\end{corollary}

\begin{proposition}                      \label{nddtow}
Assume that $(L|K,v)$ is a finite uv--extension which is a tower of
extensions of degree $p$. If $d(L|K,v)=p^m$ with $m\geq 0$, then
\[
\ndd(L|K,v)\leq m\cdot (vL:vK)\;\mbox{ and }\;\ndd^*(L|K,v)\leq m\>.
\]
\end{proposition}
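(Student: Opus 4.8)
The plan is to induct on the number $n$ of degree-$p$ steps in a tower $K=K_0\subseteq K_1\subseteq\cdots\subseteq K_n=L$. Each step $(K_{j+1}|K_j,v)$ is again a uv--extension of prime degree $p$, so by the fundamental equality and Lemma~\ref{imuv} it is either defectless (with $d(K_{j+1}|K_j,v)=1$) or an immediate defect extension (with $d=p$ and $vK_{j+1}=vK_j$). Since the defect is multiplicative along towers of finite uv--extensions, $p^m=d(L|K,v)$ records exactly how many of the $n$ steps are defect steps. The base case $n=0$ (so $L=K$, $m=0$) is trivial, and for the inductive step I would isolate the first step $E:=K_1=K(a)$ and apply the induction hypothesis to the shorter tower $(L|E,v)$.

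Suppose first that $(E|K,v)$ is defectless, so $d(L|E,v)=p^m$ and $(vL:vK)=(vL:vE)(vE:vK)$. By Lemma~\ref{2.7}(b) every $b\in L\setminus K$ that is weakly immediate over $K$ is weakly immediate over $E$ with $\dist(b,E)=\dist(b,K)$, so the distances occurring in $(L|K,v)$ are literally distances occurring in $(L|E,v)$. Hence modulo $\widetilde{vK}=\widetilde{vE}$ there are at most $\ndd^*(L|E,v)\le m$ of them, giving $\ndd^*(L|K,v)\le m$; and passing from $vE$ to the subgroup $vK$ of index $(vE:vK)$ multiplies the count by at most $(vE:vK)$ (the same coset-counting as in Lemma~\ref{nddKK'}), so $\ndd(L|K,v)\le \ndd(L|E,v)\cdot(vE:vK)\le m(vL:vE)(vE:vK)=m(vL:vK)$.

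Now suppose $(E|K,v)$ is a defect step, hence immediate with $vE=vK$ and $d(L|E,v)=p^{m-1}$; the induction hypothesis gives $\ndd(L|E,v)\le (m-1)(vL:vK)$ and $\ndd^*(L|E,v)\le m-1$. Here I would split the weakly immediate $b\in L\setminus K$ using the dichotomy of Lemma~\ref{distdist}: if $\dist(b,K)=\dist(b,E)$ then $b$ is again weakly immediate over $E$ and its distance is one occurring in $(L|E,v)$; if $\dist(b,K)<\dist(b,E)$ then Lemma~\ref{distdist} yields $c\in E\setminus K$, weakly immediate over $K$, with $\dist(c,K)=\dist(b,K)$, and Corollary~\ref{idegp} forces $\dist(b,K)$ to equal $\dist(a,K)$ modulo $vK$. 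Thus distances of the first kind contribute at most $\ndd^*(L|E,v)\le m-1$ classes modulo $\widetilde{vK}$ and at most $\ndd(L|E,v)\le(m-1)(vL:vK)$ classes modulo $vK$ (using $vE=vK$), while all distances of the second kind collapse to the single class of $\dist(a,K)$. Adding one in each case yields $\ndd^*(L|K,v)\le m$ and $\ndd(L|K,v)\le(m-1)(vL:vK)+1\le m(vL:vK)$, the last step because $(vL:vK)\ge 1$.

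The main obstacle is the defect-step case: one must be sure that the only genuinely new distance introduced by descending from $E$ to $K$ is the single class $\dist(a,K)$ coming from the prime-degree immediate extension $E|K$, and that every other distance over $K$ is already accounted for over $E$. This is precisely what the equality/strict-inequality dichotomy of Lemma~\ref{distdist} delivers, with Corollary~\ref{idegp} pinning down the one extra class. The small point that makes the bookkeeping close is verifying that a weakly-immediate-over-$K$ element with $\dist(b,K)=\dist(b,E)$ is genuinely weakly immediate over $E$, so that it is actually counted by $\ndd(L|E,v)$ and $\ndd^*(L|E,v)$.
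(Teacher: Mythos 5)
Your proof is correct, but it runs the induction in the opposite direction from the paper's. The paper keeps the base field $K$ fixed and grows the tower upward: it sets $K=L_0\subset L_1\subset\cdots\subset L_n=L$, bounds $\ndd(L_i|K,v)$ and $\ndd^*(L_i|K,v)$ for increasing $i$, applies the dichotomy of Lemma~\ref{distdist} to the new elements $a\in L_{i+1}\setminus L_i$ over $L_i$, and, when a genuinely new distance occurs, uses Lemma~\ref{EF} to conclude that the step $(L_{i+1}|L_i,v)$ is immediate (so $m_{i+1}=m_i+1$) and Corollary~\ref{idegp} to conclude that all new distances agree modulo $vL_i$, hence contribute at most $(vL_i:vK)$ new classes modulo $vK$ and one modulo $\widetilde{vK}$. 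You instead peel off the \emph{bottom} step $E=K_1$ and re-base, applying the induction hypothesis to $(L|E,v)$; this forces you to transfer distances across $E|K$, which costs two auxiliary results the paper's proof never touches: Lemma~\ref{2.7}(b) (for a defectless bottom step, so that no distances are created or moved) together with the coset-counting of Lemma~\ref{nddKK'}, and, for an immediate bottom step, the distdist dichotomy over $E|K$ with Corollary~\ref{idegp} applied over $K$ itself. A pleasant byproduct of your arrangement is that each defect step adds only \emph{one} new class modulo $vK$ (since idegp is invoked over the base $K$, not over an intermediate field), giving the intermediate bound $(m-1)(vL:vK)+1$, whereas the paper's fixed-base bookkeeping allows up to $(vL_i:vK)$ new classes per defect step; both of course land under $m\cdot(vL:vK)$. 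The one point you flag but do not verify --- that $\dist(b,K)=\dist(b,E)$ together with weak immediacy over $K$ forces weak immediacy over $E$ --- is indeed needed so that first-kind distances are counted by $\ndd(L|E,v)$, but it closes in one line: the lower cut set of $\dist(b,K)$ is the initial segment generated by $v(b-K)$, and an initial segment generated by a set without a maximal element has no maximal element, so $v(b-E)$ can have none either; the paper's own proof uses the same fact implicitly when it asserts that $a$ is weakly immediate over $L_i$.
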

\begin{proof}
We consider a tower $K=L_0\subset L_1\subset\ldots\subset L_n=L$ of
uv--extensions of degree $p$. We write $d(L_i|K,v)=p^{m_i}$, with $m_n=m$.
We proceed by induction on $i\leq n$.

The induction start is covered by Corollary~\ref{idegp} if $(L_1|K,v)$ is immediate. In this case, we have
$(vL_1:vK)=1$, $m_1=1$ and $\ndd(L_1|K,v)=1=m_1\cdot (vL_1:vK)$. Also, $\ndd^*(L_1|K,v)=1=m_1\,$.
If the extension is not immediate, then it is defectless (as it is of
prime degree). Hence $m_1=0$ and $\ndd(L_1|K,v)=0=m_1\cdot (vL_1:vK)$. Also, $\ndd^*(L_1|K,v)=0=m_1\,$.

\pars
Now we assume that for some $i<n$ we have already shown that
$\ndd(L_i|K,v)\leq m_i\cdot (vL_i:vK)$ and $\ndd(L_i|K,v)\leq m_i\,$. Take any $a\in L_{i+1}\setminus L_i$
which is weakly immediate over $K$. Since $[L_{i+1}:L_i]$ is prime,
we have that $L_{i+1}=L_i(a)$. By Lemma~\ref{distdist}, either $\dist(a,K)
=\dist(b,K)$ holds for some $b\in L_i\,$, or $\dist(a,K)=\dist(a,L_i)$.

Suppose that there is such an element $a$ for which the latter holds. Then
$a$ is weakly immediate over $L_i$ and by Lemma~\ref{EF}, the uv--extension $(L_{i+1}|L_i,v)$ is
immediate. Hence, $d(L_{i+1}|K,v)=d(L_i|K,v)\cdot p$, so $m_{i+1}=m_i +1$.
By Lemma~\ref{idegp}, $\ndd(L_{i+1}|L_i,v)=1$. This says that
modulo $vL_i$, all distances $\dist(a,K)$ arising in this way must be equal.
Consequently, there can be at most $(vL_i:vK)$ many that are distinct
modulo $vK$, and only one modulo $\widetilde{vK}$. This is in addition to the number of distinct distances arising
from elements in $L_i\,$. So we obtain that
\begin{eqnarray*}
\ndd(L_{i+1}|K,v)&\leq& (vL_i:vK)\,+\,m_i\cdot (vL_i:vK) \>=\> m_{i+1}\cdot (vL_{i+1}:vK)\>,\\
\ndd^*(L_{i+1}|K,v)&\leq& 1\,+\,m_i \>=\> m_{i+1}\>.
\end{eqnarray*}

Suppose now that there is no such element $a$. Then
\begin{eqnarray*}
\ndd(L_{i+1}|K,v)&=&\ndd(L_i|K,v)\>=\>m_i\cdot (vL_i:vK)\>\leq\> m_{i+1}\cdot (vL_{i+1}:vK)\>,\\
\ndd^*(L_{i+1}|K,v)&=&\ndd^*(L_i|K,v)\>=\>m_i\>\leq\> m_{i+1}\>.
\end{eqnarray*}
This completes our induction.
\end{proof}

\pars
We will now generalize this result to arbitrary finite, not necessarily
immediate, uv--extensions.

\begin{theorem}                              \label{nonhens}
Take a finite uv--extension $(L|K,v)$ and write $d(L|K,v)=p^m$ with $m\geq 0$. Then $\ndd(L|K,v)\leq m\cdot
[L:K]!/p^m$ and $\ndd^*(L|K,v)\leq m$. If in addition
$L|K$ is a normal extension, then $\ndd(L|K,v)\leq m\cdot (vL:vK)$.
\end{theorem}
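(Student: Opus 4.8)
The plan is to reduce the general case to Proposition~\ref{nddtow} in two steps, first passing to the henselization of $K$ and then to a suitable ramification field. First I would replace $(K,v)$ by its henselization $(K^h,v)$ and $L$ by $L^h:=L.K^h$. Since $L|K$ is a uv--extension, Lemma~\ref{uvdisj} yields $[L^h:K^h]=[L:K]$, and Lemma~\ref{hensdef} gives $d(L^h|K^h,v)=d(L|K,v)=p^m$; as $vK^h=vK$ and $vL^h=vL$ we also have $(vL^h:vK^h)=(vL:vK)$, and normality of $L|K$ passes to $L^h|K^h$. By Lemma~\ref{dKdKh} the distances of weakly immediate elements are unchanged on passing to $K^h$, so Lemma~\ref{nddKK'} applied with $K'=K^h$ (note $(vK^h:vK)=1$) gives $\ndd(L|K,v)\le\ndd(L^h|K^h,v)$ and $\ndd^*(L|K,v)\le\ndd^*(L^h|K^h,v)$. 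Thus it suffices to prove all three bounds when $K$ is henselian.

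Now assume $(K,v)$ is henselian, let $N$ be the normal hull of $L|K$ (so $N|K$ is finite normal with $[N:K]\le[L:K]!$ by the usual splitting-field bound), and let $V$ be the ramification field of $N|K$; being algebraic over the henselian field $K$, the field $V$ is itself henselian. The compositum $L.V$ is a finite subextension of $N|V$, so by part b) of Lemma~\ref{overr} it is a tower of normal extensions of degree $p$, and it is a uv--extension since $V$ is henselian. Applying part d) of Lemma~\ref{overr} to the subextensions $L$ and $L.V$ of $N|K$ gives $d(L.V|V,v)=d(L|K,v)=p^m$ and $d(L.V|K,v)=d(L.V|V,v)=p^m$. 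Proposition~\ref{nddtow} now applies to $(L.V|V,v)$ and yields
\[
\ndd(L.V|V,v)\;\le\; m\cdot(v(L.V):vV)\qquad\mbox{and}\qquad\ndd^*(L.V|V,v)\;\le\; m\,.
\]
By part e) of Lemma~\ref{overr} the distances of weakly immediate elements agree over $K$ and over $V$, so Lemma~\ref{nddKK'} with $K'=V$ gives at once $\ndd^*(L|K,v)\le\ndd^*(L.V|V,v)\le m$, the second claimed bound.

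For the first bound, the same application of Lemma~\ref{nddKK'} gives
\[
\ndd(L|K,v)\;\le\;\ndd(L.V|V,v)\cdot(vV:vK)\;\le\; m\cdot(v(L.V):vV)(vV:vK)\;=\;m\cdot(v(L.V):vK)\,.
\]
Feeding $d(L.V|K,v)=p^m$ into the defect formula for the uv--extension $L.V|K$ yields
\[
(v(L.V):vK)\;=\;\frac{[L.V:K]}{p^m\,[(L.V)v:Kv]}\;\le\;\frac{[L.V:K]}{p^m}\;\le\;\frac{[N:K]}{p^m}\;\le\;\frac{[L:K]!}{p^m}\,,
\]
so $\ndd(L|K,v)\le m\cdot[L:K]!/p^m$. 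Finally, if $L|K$ is normal I would simply take $N=L$; then $V\subseteq L$ and $L.V=L$, and the previous chain collapses to $\ndd(L|K,v)\le m\cdot(vL:vV)(vV:vK)=m\cdot(vL:vK)$, the sharp bound in the normal case.

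The heart of the argument, and the reason the factorial enters, is that for a non-normal $L$ the ramification-field machinery only controls the compositum $L.V$ inside the normal hull $N$ rather than $L$ directly; the resulting slack is pushed into the value-group index $(v(L.V):vK)$, which I bound crudely by $[N:K]/p^m\le[L:K]!/p^m$. The remaining care is in checking the hypotheses of Lemma~\ref{nddKK'} at each reduction, i.e.\ that the pertinent distances really are preserved on passing to $K^h$ and to $V$; this rests on the fact that only weakly immediate elements contribute to $\ndd$, for which Lemma~\ref{dKdKh} and part e) of Lemma~\ref{overr} provide the needed equalities.
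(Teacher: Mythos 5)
Your proof is correct and follows essentially the same route as the paper's: reduction to the henselian case via Lemmas~\ref{dKdKh}, \ref{hensdef}, \ref{uvdisj} and \ref{nddKK'}, then passage to the ramification field $V$ of the normal hull $N$, application of Proposition~\ref{nddtow} to the tower $L.V|V$, and transfer of the bounds back to $K$ via parts d) and e) of Lemma~\ref{overr} together with Lemma~\ref{nddKK'}. The only inessential difference is the final arithmetic step: you bound $(v(L.V):vK)$ directly from the defect formula for $L.V|K$ using $d(L.V|K,v)=p^m$, whereas the paper uses $(v(L.V):vK)\leq(vN:vK)\leq [N:K]/d(N|K,v)\leq [N:K]/p^m$; both yield $m\cdot [L:K]!/p^m$.
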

\begin{proof}
First, we show that we may assume $(K,v)$ to be henselian. For every $a\in L\setminus K$, Lemma~\ref{dKdKh} shows
that $\dist(a,K^h)=\dist(a,K)$. By Lemma~\ref{nddKK'} we obtain that $\ndd(L|K,v)\leq\ndd(L.K^h|K^h,v)\cdot
(vK^h:vK)= \ndd(L.K^h|K^h,v)$ and $\ndd^*(L|K,v)\leq\ndd^*(L.K^h|K^h,v)$.
On the other hand, Lemma~\ref{hensdef} shows that $d(L|K,v)=
d(L.K^h|K^h,v)$, and Lemma~\ref{uvdisj} yields that $[L.K^h:K^h]=[L:K]$. Since the henselization of a valued field is an immediate extension of the field, $(vL.K^h:vK^h)=(vL^h:vK^h)=(vL:vK)$. Thus, we may replace $K$ by its henselization.

\pars
We denote the normal hull of $L$ over $K$ by $N$. Since $(K,v)$ is henselian, there is a unique extension of $v$
from $L$ to $N$ and $(N|K,v)$ is again a uv--extension. Now we take $V$ to be the ramification field of
$(N|K,v)$. From Lemma~\ref{overr} we obtain that $(V|K,v)$ is a defectless uv-extension such that
$d(L.V|V,v)=d(L|K,v)=p^m$ and that $\dist(a,V)=\dist(a,K)$ for every $a\in N\setminus K$ which is weakly
immediate over $K$. From Lemma~\ref{nddKK'} we thus obtain that $\ndd(L|K,v)\leq\ndd(L.V|V,v)\cdot (vV:vK)$ and
$\ndd^*(L|K,v)\leq\ndd^*(L.V|V,v)$.
By part b) of Lemma~\ref{overr} we know that the subextension $L.V|V$ of $N|V$ is a tower of normal extensions of
degree $p$. Hence Proposition~\ref{nddtow} shows that $\ndd(L.V|V,v)\leq m\cdot (v(L.V):vV)$ and $\ndd^*(L.V|V,v)
\leq m$. Altogether we have that $\ndd^*(L|K,v)\leq\ndd^*(L.V|V,v)\leq m$ and that
\begin{eqnarray*}
\ndd(L|K,v) &\leq& \ndd(L.V|V,v)\cdot (vV:vK)\\
&\leq& m\cdot (v(L.V):vV)\cdot (vV:vK)\>=\> m\cdot (v(L.V):vK)\>.
\end{eqnarray*}

If $L|K$ is a normal extension, then $N=L$ and $V\subseteq L$. From this we get that $(v(L.V):vK)=(vL:vK)$,
which yields the second assertion of our theorem.

\pars
In the general case, we have that $d(N|K,v)\geq d(L|K,v)=p^m$ and
\[
(v(L.V):vK)\>\leq\> (vN:vK)\>\leq\> [N:K]/d(N|K,v)\>\leq\> [N:K]/p^m\>.
\]
Since $[N:K]\leq [L:K]!\,$, this yields the first assertion of our theorem.
\end{proof}

\bn
%
%
%ÄÄÄÄÄÄÄÄÄÄÄÄÄÄÄÄÄÄÄÄÄÄÄÄÄÄÄÄÄÄÄÄÄÄÄÄÄÄÄÄÄÄÄÄÄÄÄÄÄÄÄÄÄÄÄÄÄÄÄÄÄÄÄÄÄÄÄÄÄÄÄ
%
\section{The number of distinct distances in all Artin-Schreier defect extensions}  \label{sectdistAS}
Throughout this section, let $(K,v)$ be a field of positive characteristic $p$. As before, we assume that $v$ is
extended to the algebraic closure $\tilde{K}$ of $K$. By Zorn's Lemma, there always exists a maximal immediate
subextension $(K'|K,v)$ of the purely inseparable $(K^{1/p}|K,v)$, where $K^{1/p}=\{c^{1/p}\mid c\in K\}$.
Throughout the present and the
final section of this paper, we will assume that $K'|K$ is finite, so that its degree is $p^m$ for some $m\geq 0$.
If $K$ has finite $p$-degree $k$, that is, $[K^{1/p}:K]=[K:K^p]=p^k$ with $k\geq 0$, then $m\leq k$.

We will now apply our previous results to consider the possible distances (modulo $vK$) of
all elements that are contained in any Artin-Schreier defect extension of $(K,v)$. In view of
Corollary~\ref{idegp}, we only have to determine the distance of one
generator of such an extension. The Artin-Schreier defect
extension $(K(\vartheta)|K,v)$ with Artin-Schreier generator $\vartheta$ is called
\bfind{dependent} if there is a purely inseparable immediate extension
$(K(\eta)|K,v)$ of degree $p$ such that
\[
v(\vartheta-\eta)\> >\>v(\vartheta-c)  \quad\mbox{for all } c\in K\;.
\]
This implies that $v(\vartheta-c)=v(\eta-c)$ for all $c\in K$ and that
\[
\dist(\vartheta,K)\>=\>\dist(\eta,K)\;.
\]
We note that by assumption, $\eta\in K^{1/p}$.

\begin{proposition}                     \label{nddpi}
Under the assumptions on $(K,v)$ outlined above, $\ndd(K^{1/p}|K,v)=\ndd(K'|K,v)\leq m$.
Moreover, if $(K,v)$ is of finite $p$-degree and $d(K^{1/p}|K,v)=p^s$, then $\ndd(K^{1/p}|K,v)\leq s$.
\end{proposition}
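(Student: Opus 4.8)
The plan is to squeeze $K^{1/p}$ between $K$ and the maximal immediate subextension $K'$, showing that $K'$ already realizes every distance while the top piece $K^{1/p}|K'$ contributes none. The decisive step, which I expect to be the main obstacle, is to prove
\[
\ndd(K^{1/p}|K',v)=0.
\]
Suppose this failed. Then some $a\in K^{1/p}\setminus K'$ would be weakly immediate over $K'$. As $a^p\in K\subseteq K'$ but $a\notin K'$, the extension $(K'(a)|K',v)$ is purely inseparable of prime degree $p$, hence a uv--extension, with generator $a$ weakly immediate over $K'$; Lemma~\ref{EF} would then force it to be immediate. But $K'(a)$ lies in $K^{1/p}$ and is an immediate extension of $K$ properly containing $K'$, contradicting the choice of $K'$ as a maximal immediate subextension of $(K^{1/p}|K,v)$. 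The care needed here is only to check that Lemma~\ref{EF} applies verbatim and that the resulting degree-$p$ immediate extension genuinely violates maximality.

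Next I would push this down to $K$. Given any $a\in K^{1/p}$ weakly immediate over $K$, the vanishing above prevents $a$ from being weakly immediate over $K'$, so $v(a-K')$ attains a maximum at some $v(a-b)$ with $b\in K'$. Since $v(a-K)$ has no maximum, a short computation gives $v(a-b)>v(a-K)$, whence $v(b-K)=v(a-K)$ and $\dist(b,K)=\dist(a,K)$ with $b\in K'$. Thus every distance occurring in $(K^{1/p}|K,v)$ already occurs, on the nose, in $(K'|K,v)$; the reverse inclusion is immediate from $K'\subseteq K^{1/p}$. Hence the two collections of distance classes modulo $vK$ coincide and $\ndd(K^{1/p}|K,v)=\ndd(K'|K,v)$.

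It then remains to bound $\ndd(K'|K,v)$. The extension $K'|K$ is purely inseparable of exponent one, so a $p$-basis exhibits it as a tower $K=L_0\subset\dots\subset L_m=K'$ of immediate uv--extensions of degree $p$; by Lemma~\ref{imuv} each step has defect $p$, so $d(K'|K,v)=p^m$. Applying Proposition~\ref{nddtow} to this tower, and using $(vK':vK)=1$ because $K'|K$ is immediate, I get $\ndd(K'|K,v)\leq m\cdot(vK':vK)=m$. Combined with the previous paragraph this establishes the first assertion.

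For the second assertion I only need the inequality $m\leq s$. Under the finite $p$-degree hypothesis $K^{1/p}|K$ is finite, so defects are multiplicative along $K\subseteq K'\subseteq K^{1/p}$:
\[
p^s=d(K^{1/p}|K,v)=d(K^{1/p}|K',v)\cdot d(K'|K,v)=d(K^{1/p}|K',v)\cdot p^m.
\]
As $d(K^{1/p}|K',v)\geq 1$ we obtain $s\geq m$, and therefore $\ndd(K^{1/p}|K,v)=\ndd(K'|K,v)\leq m\leq s$, which completes the plan.
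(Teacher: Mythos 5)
Your proposal is correct and takes essentially the same approach as the paper's proof: both hinge on combining the maximality of $K'$ with Lemma~\ref{EF} to exclude elements of $K^{1/p}$ that are weakly immediate over $K'$, thereby transferring every distance to an element of $K'$, and then apply Proposition~\ref{nddtow} with $(vK':vK)=1$ and the monotonicity of the defect along $K\subseteq K'\subseteq K^{1/p}$ to get $m\leq s$. The only cosmetic difference is that you phrase the key step as $\ndd(K^{1/p}|K',v)=0$ and carry out the ultrametric computation by hand, where the paper instead invokes the dichotomy of Lemma~\ref{distdist}.
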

\begin{proof}
For every $a\in K^{1/p}$ which is weakly immediate over $K$, there must be
some $b\in K'$ with $\dist(a,K)=\dist(b,K)$. Otherwise, we would obtain
that $\dist(a,K')=\dist(a,K)$ which yields that $a$ is weakly immediate over $K'$;
since $[K'(a):K']=p$, this would show by Lemma~\ref{EF} that $(K'(a)|K',v)$ and hence also
$(K'(a)|K,v)$ are immediate extensions, contradicting the maximality of $K'$.
So we have that $\ndd(K^{1/p}|K,v)=\ndd(K'|K,v)$.

Since $(K'|K,v)$ is immediate, we have that $d(K'|K,v)=[K':K]=p^m$. Hence by Proposition~\ref{nddtow},
$\ndd(K'|K,v)\leq m$, because $(vL:vK)=1$.

For the proof of the last assertion, note that if $(K,v)$ is of finite $p$-degree, then
\[
p^m=[K':K]=d(K'|K,v)\leq d(K^{1/p}|K,v)=p^s.
\]
Thus $m\leq s$.
\end{proof}

From Proposition~\ref{nddpi} together with Corollary~\ref{idegp} we obtain the following result:
\begin{proposition}
Under the assumptions on $(K,v)$ outlined in the beginning of this section, there are elements $c_1,\ldots,c_m\in K$
such that for every dependent Artin-Schreier defect extension $(K(a)|K,v)$ there is $i\in \{1,\ldots,m\}$ such that
for every $b\in K(a)\setminus K$ there is some $\alpha\in vK$ with
\[
\dist(b,K)\>=\>\alpha+\dist(c_i^{1/p},K)\>.
\]
Hence all distinct distances modulo $vK$ of elements in dependent Artin-Schreier defect extensions of
$(K,v)$ are already among the distinct distances modulo $vK$ of elements in purely inseparable defect
extensions of degree $p$ of $(K,v)$, and their number is bounded by $m$.
\end{proposition}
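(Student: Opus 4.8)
The plan is to reduce the statement to the two facts already in hand: that the purely inseparable extension $K^{1/p}|K$ carries at most $m$ distances modulo $vK$ (Proposition~\ref{nddpi}), and that inside a single Artin-Schreier defect extension all distances collapse to a single one modulo $vK$ (Corollary~\ref{idegp}). The bridge between the two is the very definition of \emph{dependent}, which transports the distance of an Artin-Schreier generator to the distance of an element of $K^{1/p}$, as already recorded before the statement.

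First I would fix, once and for all, a family of representatives for the distances in $K^{1/p}$. Since $\ndd(K^{1/p}|K,v)\le m$ by Proposition~\ref{nddpi}, there are elements $a_1,\ldots,a_m\in K^{1/p}\setminus K$, each weakly immediate over $K$, such that every $b\in K^{1/p}\setminus K$ for which $v(b-K)$ has no maximal element satisfies $\dist(b,K)=\alpha+\dist(a_i,K)$ for some $i$ and some $\alpha\in vK$. Writing $a_i=c_i^{1/p}$ with $c_i\in K$ produces the desired elements $c_1,\ldots,c_m$. At this point I would also record that each $K(c_i^{1/p})|K$ is a purely inseparable defect extension of degree $p$: it is a uv--extension of prime degree with a weakly immediate generator, hence immediate by Lemma~\ref{EF}, and immediate of degree $p$ means nontrivial defect by Lemma~\ref{imuv}. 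This observation is exactly what will justify the closing sentence of the statement.

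Next I take an arbitrary dependent Artin-Schreier defect extension $(K(a)|K,v)$. By the definition of dependence there is a purely inseparable immediate extension $(K(\eta)|K,v)$ of degree $p$ with $\eta\in K^{1/p}$ and $\dist(a,K)=\dist(\eta,K)$. Since $K(\eta)|K$ is immediate, $\eta$ is weakly immediate over $K$ and $\eta\in K^{1/p}\setminus K$, so $\eta$ is covered by the representatives of the previous paragraph: there are $i\in\{1,\ldots,m\}$ and $\alpha_0\in vK$ with $\dist(\eta,K)=\alpha_0+\dist(c_i^{1/p},K)$, whence $\dist(a,K)=\alpha_0+\dist(c_i^{1/p},K)$. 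Finally I pass from the generator $a$ to an arbitrary $b\in K(a)\setminus K$: the extension $(K(a)|K,v)$ is a uv--extension of prime degree $p$ (Lemma~\ref{Gepd}) whose generator $a$ is weakly immediate over $K$, so Corollary~\ref{idegp} provides $\beta\in vK$ with $\dist(b,K)=\beta+\dist(a,K)$. Combining the two equalities gives $\dist(b,K)=(\beta+\alpha_0)+\dist(c_i^{1/p},K)$ with $\beta+\alpha_0\in vK$, which is the asserted equality with $\alpha=\beta+\alpha_0$; the count then follows, since at most $m$ distinct distances modulo $vK$ arise and each is realized by the generator $c_i^{1/p}$ of a purely inseparable defect extension of degree $p$.

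I expect no genuine obstacle in this argument, as it is essentially an assembly of the cited results. The only point demanding care is the bookkeeping that guarantees $\eta$ is actually handled by the chosen representatives: one must verify that $\eta$ lies in $K^{1/p}\setminus K$ and is weakly immediate over $K$, so that the defining property of $\ndd(K^{1/p}|K,v)$ genuinely applies to it, and one must invoke Lemma~\ref{EF} to see that the representatives $c_i^{1/p}$ generate purely inseparable defect extensions rather than merely defectless ones.
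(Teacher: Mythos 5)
Your proof is correct and follows exactly the route the paper intends: the paper derives this proposition directly from Proposition~\ref{nddpi} (at most $m$ distances in $K^{1/p}$ modulo $vK$) and Corollary~\ref{idegp} (one distance per prime-degree extension), with the definition of dependence supplying the bridge $\dist(\vartheta,K)=\dist(\eta,K)$, $\eta\in K^{1/p}$. Your write-up merely makes explicit the bookkeeping (choice of representatives $c_i^{1/p}$, and Lemmas~\ref{EF} and \ref{imuv} to see they generate purely inseparable \emph{defect} extensions) that the paper leaves to the reader.
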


In order to make a statement about \emph{all} possible Artin-Schreier defect extensions $(K(a)|K,v)$, we also have
to consider the \bfind{independent} ones, that is, the ones that are not dependent. It is shown in \cite{[Ku3]} that
if $a$ is an Artin-Schreier generator of the extension, then $\dist(a,K)$ is the lower edge of some proper convex
subgroup $H$ of $\widetilde{vK}$, that is, the lower cut set of $\dist(a,K)$ is the largest initial segment of
$\widetilde{vK}$ that does not meet $H$. We summarize:

\begin{lemma}
The distances of all elements in Artin-Schreier defect extensions
$(K(a)|K,v)$ modulo $vK$ are among the lower edges of convex
subgroups of the value group $vK$ together with the distances of the
elements in $K^{1/p}$.
\end{lemma}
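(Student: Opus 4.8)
The plan is to split the distances of elements in Artin-Schreier defect extensions into two disjoint cases, dependent and independent, and show that each case lands in one of the two asserted families. Given any Artin-Schreier defect extension $(K(a)|K,v)$ with generator $a$, Corollary~\ref{idegp} tells us that $a$ is weakly immediate over $K$ (the extension is immediate of prime degree by Lemma~\ref{Gepd}) and that every $b\in K(a)\setminus K$ satisfies $\dist(b,K)=\alpha+\dist(a,K)$ for some $\alpha\in vK$. Thus the distance of \emph{every} element of the extension coincides, modulo $vK$, with the distance $\dist(a,K)$ of the generator. Hence it suffices to classify $\dist(a,K)$ itself in each of the two cases.

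First I would treat the dependent case. By the definition of dependence recalled just before the statement, there is a purely inseparable immediate extension $(K(\eta)|K,v)$ of degree $p$ with $\dist(\vartheta,K)=\dist(\eta,K)$ and $\eta\in K^{1/p}$. So in this case $\dist(a,K)$ agrees modulo $vK$ with the distance of an element of $K^{1/p}$, which is exactly the second family named in the statement. This disposes of all dependent extensions immediately.

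Next I would handle the independent case, which is where the convex-subgroup description enters. Here I invoke the result cited from \cite{[Ku3]}: for an independent Artin-Schreier defect extension with Artin-Schreier generator $a$, the distance $\dist(a,K)$ is the lower edge of some proper convex subgroup $H$ of $\widetilde{vK}$, meaning its lower cut set is the largest initial segment of $\widetilde{vK}$ disjoint from $H$. One caveat to address is that the statement refers to convex subgroups of the value group $vK$, whereas the quoted fact produces a convex subgroup of the divisible hull $\widetilde{vK}$; I would note that the convex subgroups of $\widetilde{vK}$ correspond bijectively to those of $vK$ (since $\widetilde{vK}$ is the divisible hull, a convex subgroup is the divisible hull of its trace on $vK$, and lower edges are preserved), so the lower edge is equally well described in terms of a convex subgroup of $vK$. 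This shows $\dist(a,K)$ lies in the first family.

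The only genuine care needed—and what I would flag as the main point rather than a real obstacle—is making sure the reduction via Corollary~\ref{idegp} is applied uniformly, so that the claim about \emph{all elements} (not just generators) follows from the claim about generators, and that the two families together cover both the dependent and independent cases exhaustively. Since every Artin-Schreier defect extension is by definition either dependent or independent, combining the two cases above yields that every such distance modulo $vK$ is either the lower edge of a convex subgroup of $vK$ or the distance of an element of $K^{1/p}$, which is the assertion. Everything else is a direct appeal to the cited facts and to Corollary~\ref{idegp}.
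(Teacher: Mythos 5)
Your proof is correct and follows essentially the same route as the paper: the paper presents this lemma as a summary of the immediately preceding discussion, namely the reduction to generators via Corollary~\ref{idegp}, the dependent case giving $\dist(\vartheta,K)=\dist(\eta,K)$ with $\eta\in K^{1/p}$, and the citation of \cite{[Ku3]} for the independent case. Your extra remark verifying that lower edges of proper convex subgroups of $\widetilde{vK}$ correspond to those of convex subgroups of $vK$ is a legitimate point that the paper glosses over, but it does not change the structure of the argument.
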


The \bfind{rank} of $(K,v)$,
%denoted by $\rk (K,v)$,
if finite, is the number of proper convex subgroups of the value group $vK$. Putting the previous results
together, we obtain:

\begin{theorem}                         \label{r+m}
Take a valued field $(K,v)$ of finite rank $r$, satisfying the assumptions outlined in the beginning of this section.
Then the number of distinct
distances modulo $vK$ of elements in all normal defect extensions of prime degree of $(K,v)$ as well as the
number of distinct distances modulo $vK$ of elements in Artin-Schreier defect extensions of $(K,v)$ are bounded by $r+m$. In particular, if $K$ has finite $p$-degree $k$,
then this number is bounded by $r+k$.
\end{theorem}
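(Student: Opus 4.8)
The plan is to assemble Theorem~\ref{r+m} directly from the structural results already established, since the heavy lifting has been done. The key observation is that the quantity we wish to bound splits into two contributions according to the dichotomy made explicit in the Lemma immediately preceding the theorem: every distance modulo $vK$ of an element in an Artin-Schreier defect extension is either the lower edge of a proper convex subgroup of $vK$, or else a distance of an element of $K^{1/p}$. The first type of distance is governed by the rank, and the second type by the integer $m$.

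First I would handle the convex-subgroup contribution. Since $(K,v)$ has finite rank $r$, by definition there are exactly $r$ proper convex subgroups of $vK$, hence at most $r$ distinct lower edges. Each such lower edge is a single cut in $\widetilde{vK}$, and distances that agree modulo $vK$ are not counted separately, so this source contributes at most $r$ distinct distances modulo $vK$. Next I would invoke Proposition~\ref{nddpi}, which gives $\ndd(K^{1/p}|K,v)\leq m$: the distances of elements in $K^{1/p}$ that are distinct modulo $vK$ number at most $m$. Adding the two bounds yields $r+m$ for the Artin-Schreier case. For the purely inseparable and, more generally, for all normal defect extensions of prime degree $p$, the same decomposition applies --- a normal degree-$p$ defect extension is either Artin-Schreier (when $K$ has characteristic $p$, which is forced here) or handled through the convex-subgroup classification --- so the same $r+m$ bound holds. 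The final ``in particular'' clause follows immediately from the remark in the section opening that $m\leq k$ when $K$ has finite $p$-degree $k$, giving the bound $r+k$.

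The main obstacle, such as it is, lies not in any single estimate but in verifying that the two contributions are genuinely the only ones and that they are being counted consistently modulo $vK$ rather than, say, modulo $\widetilde{vK}$. In particular I would need to be careful that the dependent Artin-Schreier extensions, whose distances by the preceding proposition coincide modulo $vK$ with distances of elements $c_i^{1/p}\in K^{1/p}$, are already absorbed into the $m$ bound and are not double-counted against the convex-subgroup edges arising from the independent case. Since the preceding Lemma asserts precisely that \emph{all} Artin-Schreier distances fall into the union of these two pools, and since Proposition~\ref{nddpi} bounds the $K^{1/p}$ pool by $m$ while the rank bounds the edge pool by $r$, the union has at most $r+m$ elements modulo $vK$, and no finer analysis is required. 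The argument is therefore a short bookkeeping synthesis rather than a new construction.
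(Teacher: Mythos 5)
Your proposal is correct and takes essentially the same route as the paper, which likewise obtains Theorem~\ref{r+m} as pure bookkeeping: the summary lemma places every relevant distance either among the lower edges of the at most $r$ proper convex subgroups of $vK$ or among the distances of elements of $K^{1/p}$, the latter pool being bounded by $m$ via Proposition~\ref{nddpi}, and the final clause follows from $m\leq k$. One small slip worth noting: the purely inseparable normal defect extensions of degree $p$ are absorbed into the $K^{1/p}$ pool (their generators lie in $K^{1/p}$, and Corollary~\ref{idegp} makes all distances in such an extension equal modulo $vK$ to that of the generator), not into the convex-subgroup pool as your wording suggests, but this does not affect the bound $r+m$.
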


\pars
For a function field $K$ over a perfect base field $K_0$, the $p$-degree
$k$ is equal to the transcendence degree $\trdeg K|K_0\,$. For a valued
function field $(K|K_0,v)$ over a trivially valued base field $(K_0,v)$, the
rank is bounded by $\trdeg K|K_0\,$. This proves Theorem~\ref{MT1}.

\bn

%%%%%%%%%%%%%%%%%%%%%%%%%%%%%%%%%%%%%%%%%%%%%%%%%%%%%%%%%%%%%%%%%%%%%%%%%%%%%%%%%%%%%%%%%%%%%%%%%%%%%%%%%%%%%%%%
%
%
\section{The number of distinct distances of all elements of bounded degree}		\label{sec3.3}

\textbf{ Throughout this section we shall work under the following assumptions}, unless indicated otherwise. We
take $(K,v)$ to be a valued field of positive characteristic $p$ and finite rank $r$.
%As in the previous section, we assume that there is a finite maximal immediate subextension $(K'|K,v)$ of $(K^{1/p}|K,v)$ and set $[K':K]=p^m$.

%We also fix an extension of $v$ to $\tilde{K}$, which in turn fixes the henselization of $(K,v)$.

For every natural number $i$ we denote by $\ndd^*_i(K,v)$ the number of distinct distances modulo $\widetilde{vK}$ of
elements $a\in \widetilde{K}\setminus K$ satisfying the following conditions:
\begin{eqnarray}                               \label{imm_elem}
\left\{ \begin{array}{l}
\textrm{$[K(a):K]$} \leq  p^i,\\
(K(a)|K,v)\textrm{ is a uv--extension,}\\
a \textrm{ is weakly immediate over }K.
\end{array} \right.
\end{eqnarray}
We will show now that for every $i\in\N$ the number $\ndd^*_i(K,v)$ is finite.

\begin{theorem}                             	\label{ndd_i}
Assume additionally that $(K,v)$ has finite $p$-degree and $d(K^{1/p}|K,v)=p^m$. Then $\ndd^*_i(K,v)$ is
finite for every natural number $i$. More precisely,
\[
\ndd^*_i(K,v) \leq r+im.\]
\end{theorem}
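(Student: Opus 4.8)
The plan is to reduce everything to the absolute ramification field $K^r$ of $(K,v)$ and then run an induction on the degree. First I would reduce to the henselian case and, in fact, all the way to $K^r$. Since $K^r|K$ is tame, distances of weakly immediate elements are preserved (Lemma~\ref{overr}e together with Lemma~\ref{2.7}b applied to the finite tame subextensions), so $\dist(a,K^r)=\dist(a,K)$ for every $a\in\tilde K$ weakly immediate over $K$; degrees can only drop, and being a uv--extension is automatic over the henselian $K^r$. Hence $\ndd^*_i(K,v)\leq\ndd^*_i(K^r,v)$, and since $\widetilde{vK^r}=\widetilde{vK}$ has the same rank $r$ and, by tame base change, $d((K^r)^{1/p}|K^r)=p^m$, it suffices to prove the bound over $K^r$. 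The gain is that over $K^r$ every finite normal extension is a tower of normal extensions of degree $p$ (the ramification group is a pro-$p$ group), so by Lemma~\ref{overr}b every $K^r(b)$ is such a tower and, moreover, every separable degree-$p$ extension of $K^r$ is Galois, i.e.\ Artin--Schreier.

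Next I would fix, over $K^r$, the at most $r$ lower edges of the proper convex subgroups of $\widetilde{vK}$ together with the at most $m$ distance classes $\Lambda_1,\dots,\Lambda_m$ of elements of $(K^r)^{1/p}$ (finite by Proposition~\ref{nddpi}). The heart of the matter is the claim: every strongly immediate element $b$ over $K^r$ with $[K^r(b):K^r]=p^k$ and $k\le i$ has $\dist(b,K^r)$, modulo $\widetilde{vK}$, either a convex-subgroup edge or equal to $p^{-\ell}\Lambda_s$ for some $0\le\ell\le k-1$ and some $s$. Granting this, and using Lemma~\ref{wisi} to pass from an arbitrary weakly immediate element of degree $\le p^i$ to a strongly immediate one of degree $p^k\le p^i$ with the same distance, all distances lie in a set of size at most $r+im$: the convex edges are invariant under multiplication by powers of $p$ and so contribute at most $r$, while the multiples $p^{-\ell}\Lambda_s$ with $0\le\ell\le i-1$ contribute at most $im$.

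I would prove the claim by induction on $k$. The base case $k=1$ is the degree-$p$ classification: the extension is purely inseparable, with distance some $\Lambda_s$, or Artin--Schreier, in which case the distance is a convex-subgroup edge in the independent case and again a $(K^r)^{1/p}$-distance in the dependent case (the lemma preceding Theorem~\ref{r+m}, cf.\ \cite{[Ku3]}). For the inductive step I use the tower structure to select the degree-$p^{k-1}$ subfield $W$ of $K^r(b)$ and apply Lemma~\ref{distdist} to $K^r\subset W\subset K^r(b)$. If $\dist(b,K^r)<\dist(b,W)$ there is $c\in W$ with $\dist(c,K^r)=\dist(b,K^r)$ and $[K^r(c):K^r]\le p^{k-1}$, so induction applies. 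Otherwise $\dist(b,K^r)=\dist(b,W)$ and the top step $K^r(b)|W$ is a degree-$p$ immediate extension: if it is purely inseparable I apply Lemma~\ref{Kap} with $f=X^p$ to obtain $\dist(b^p,K^r)=p\cdot\dist(b,K^r)$ with $b^p\in W$ of degree $\le p^{k-1}$ and induct; if it is Artin--Schreier and independent the distance is a convex edge; and if it is dependent there is a witness $\eta\in W^{1/p}$ with $\dist(\eta,K^r)=\dist(b,K^r)$ and $\eta^p\in W$, which I reduce exactly as in the purely inseparable case (first applying Lemma~\ref{wisi} to $\eta$ if it is not strongly immediate). In every branch $\dist(b,K^r)$ is a convex edge or $p^{-\ell}\Lambda_s$ with $\ell$ increased by at most one, so $\ell\le k-1$ is maintained.

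The main obstacle is precisely the situation in which $K(a)$ has no proper intermediate field, so one cannot descend inside $K(a)$ itself; this is what forces the passage to $K^r$, where the tower of degree-$p$ normal steps always supplies a degree-$p^{k-1}$ subfield and where every separable degree-$p$ step is Artin--Schreier, so that the degree-$p$ classification is available at the top of the tower. The two technical points I would verify with care are that tame base change preserves $d((K^r)^{1/p}|K^r)=p^m$, so the same $m$ governs $K^r$, and that the exponent bookkeeping in the purely inseparable and dependent branches never pushes $\ell$ past $k-1$, which is exactly what limits the multiples $p^{-\ell}\Lambda_s$ to the count $im$.
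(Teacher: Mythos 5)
Your reduction to the absolute ramification field, the use of Lemma~\ref{wisi} to pass to strongly immediate elements, and your cases where the distance drops at $W$ (via Lemma~\ref{distdist}), where the top step is purely inseparable (Frobenius, i.e.\ Lemma~\ref{Kap} with $f=X^p$), and where it is an independent Artin--Schreier extension, all track the paper's argument and are essentially sound (up to the bookkeeping point that Lemma~\ref{wisi} must be iterated and the induction hypothesis stated for weakly immediate rather than strongly immediate elements). The genuine gap is in the \emph{dependent} Artin--Schreier sub-case, and it is caused by your structural choice to analyze the degree-$p$ step at the top of the tower, over the intermediate field $W$, which varies with $b$. What dependence plus Corollary~\ref{idegp} actually give there is $\dist(b,K^r)=\dist(b,W)\equiv\dist(\eta,W)$ modulo $vW$, with $\eta\in W^{1/p}$; your assertion that $\dist(\eta,K^r)=\dist(b,K^r)$ is unjustified, since Lemma~\ref{distdist} only yields $\dist(\eta,K^r)\leq\dist(\eta,W)$, and when this inequality is strict the lemma produces information about the smaller cut $\dist(\eta,K^r)$, not about $\dist(\eta,W)$, which is the cut you must classify. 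Note that $W|K^r$ is itself an immediate (hence defect) extension, so neither Lemma~\ref{2.7} nor Lemma~\ref{overr} lets you identify the two distances, and $\eta$ need not even be weakly immediate over $K^r$, so Lemma~\ref{wisi} cannot be applied to it over the base field. Nor can you ``reduce exactly as in the purely inseparable case'': Lemma~\ref{Kap} with $f=X^p$ is not applicable to $\eta$ over $W$, because $\deg f=p$ is not smaller than $[W(\eta):W]=p$, and the raw Frobenius identity only gives $\dist(\eta^p,W^p)=p\cdot\dist(\eta,W)$, a distance over $W^p$, a field that does not contain $K^r$ and hence does not feed back into your induction.

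The obvious fallback---applying Proposition~\ref{nddpi} to $(W,v)$ to bound the distances of elements of $W^{1/p}$ weakly immediate over $W$ by $m$ classes modulo $vW$---does not rescue the count either: those classes depend on $W$, and $W$ varies with the element $b$, so the uniform bound $r+im$ is lost. This is precisely the point where the paper anchors the Artin--Schreier extension at the \emph{bottom} instead: since (after the reduction $K=K^r$) the maximal separable subextension of $K(a)|K$ is a tower of Galois extensions of degree $p$ starting at $K$, there is an Artin--Schreier defect extension $K(\vartheta)\subseteq K(a)$ of the fixed base field; writing $\vartheta=f(a)$ and applying Lemma~\ref{Kap} to the strongly immediate element $a$ yields $\dist(\vartheta,K)=\alpha+p^s\dist(a,K)$ with $\alpha\in vK$ and $s<i$. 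In the dependent case one then solves for $\dist(a,K)=\dist\bigl((\eta/c)^{1/p^s},K^{1/p^s}\bigr)$, a purely inseparable distance over the \emph{fixed} field $K^{1/p^s}$, where Proposition~\ref{nddpi} (transported by Frobenius) gives a uniform bound of $m$ classes per level; in the independent case one uses the invariance $p^s\dist(\vartheta,K)=\dist(\vartheta,K)$ to divide by $p^s$. Your proof needs this bottom-anchored device (or an equivalent one keeping all reference fields fixed); as written, the dependent branch of your induction fails.
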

\begin{proof}
In what follows, let $a\in \widetilde{K}$ satisfy the assumptions (\ref{imm_elem}). Lemma~\ref{dKdKh} shows
that $\dist(a,K)=\dist(a,K^h)$. This implies in particular that $a$ is weakly
immediate over $K^h$. Furthermore, the assumptions (\ref{imm_elem}) together with Lemma~\ref{uvdisj} yield that
$[K^h(a):K^h]=[K(a):K]$. Hence, for every natural number $i$ we have that $\ndd^*_i(K,v)\leq \ndd^*_i(K^h,v)$.

We wish to show that also $(K^h,v)$ satisfies the assumptions stated at the beginning of this section.
Since $K^h|K$ is a separable algebraic extension, $K^h$ has the same $p$-degree as $K$, so $[(K^h)^{1/p}:K^h]=p^k$.
It follows that $(K^h)^{1/p}=K^{1/p}.K^h$. Since $K^{1/p}|K$ is finite and $v$ extends uniquely from $K$
to $K^{1/p}$, Lemma~\ref{hensdef} yields that
\[
p^m\>=\>d(K^{1/p}|K,v)\>=\>d(K^{1/p}.K^h|K^h,v)\>=\>d((K^h)^{1/p}|K^h,v)\>.
\]
Furthermore, $vK^h=vK$ is again of rank $r$. Hence we can assume that $(K,v)$ is henselian.

\pars
Take $K^r$ to be the absolute ramification field of $K$ with respect to the fixed extension of $v$ to $\tilde{K}$.
Lemma~\ref{overr} shows
that $\dist(a,K)=\dist(a,K^r)$. This implies in particular that $a$ is weakly
immediate over $K^r$. Moreover, $[K^r(a):K^r]\leq [K(a):K]$ and $\widetilde{vK}=\widetilde{vK^r}$. Therefore,
$\ndd^*_i(K,v)\leq \ndd^*_i(K^r,v)$ for every $i\in\N$.

We wish to show that also $(K^r,v)$ satisfies the assumptions stated at the beginning of this section.
Since $K^r|K$ is a separable algebraic extension, $K^r$ has the same $p$-degree as $K$, so $[(K^r)^{1/p}:K^r]=p^k$.
It follows that $(K^r)^{1/p}=K^{1/p}.K^r$. Lemma~\ref{overr} yields that
\[
p^m=d(K^{1/p}|K,v)= d(K^{1/p}.K^r|K^r,v)=d((K^r)^{1/p}|K^r,v).
\]
Furthermore, $vK^r/vK$ is a torsion group, hence $vK^r$ is again of rank $r$. Hence we can assume that $K^r=K$.
Note that by Lemma~\ref{overr} this means that the extension $K(a)|K$
is a tower of normal extensions of degree $p$. In particular, it is of degree $p^t$ for some $t\in\{0,\ldots,i\}$.

\pars
We proceed by induction on $i$. The case of $i=1$ is covered by Theorem~\ref{r+m}. Now assume that $i\geq 2$ and
\[
\ndd^*_{i-1}(K,v) \leq r+(i-1)m\>.
\]
To give an upper bound for $\ndd^*_{i}(K,v)$, it is enough to consider elements of degree $p^i$ over $K$ which are
weakly immediate over $K$. Indeed, the distances of elements $a$ of degree at most $p^{i-1}$ are already counted in
$\ndd^*_{i-1}(K,v)$. Hence we assume that $[K(a):K]=p^i$.

If $a$ is not strongly immediate over $K$, then by Lemma~\ref{wisi} there is an immediate extension
$(K(b)|K,v)$ with $\dist(b,K)=\dist(a,K)$ and $[K(b):K]<[K(a):K]$. By Lemma~\ref{overr} the degree $[K(b):K]$ must be a power of $p$. We
conclude that $[K(b):K]\leq p^{i-1}$, showing that $\dist(a,K)$ is already counted in
$\ndd^*_{i-1}(K,v)$. Hence we assume that $a$ is strongly immediate over $K$. By Lemma~\ref{si} this
implies that the extension $(K(a)|K,v)$ is immediate.

\pars
Assume first that  $K(a)|K$ is purely inseparable. Then from Lemma~\ref{distdist} we deduce that
$\dist(a,K)=\dist(a,K^{1/p^{i-1}})$ or $\dist(a,K)=\dist(d,K)$ for some $d\in K^{1/p^{i-1}}$. If the latter holds,
then $d$ is weakly immediate over $K$ and therefore, $\dist(b,K)$ appears already as a distance of some weakly
immediate element of degree $\leq p^{i-1}$.
So we may assume that the former holds. Then $K^{1/p^{i-1}}(a)|K^{1/p^{i-1}}$ is a purely inseparable extension of
degree $p$ and the element $a$ is weakly immediate over $K^{1/p^{i-1}}$.
Since $d(K^{1/p^{i}}|K^{1/p^{i-1}},v)=d(K^{1/p}|K,v)=p^m$, Proposition~\ref{nddpi} shows that there are at
most $m$ distinct distances of elements of $K^{1/p^{i}}$ weakly immediate over $K^{1/p^{i-1}}$, modulo
$vK^{1/p^{i-1}}=\frac{1}{p^{i-1}}vK$, hence also modulo $\widetilde{vK}$.
This renders at most $m$ additional distinct distances $\dist(a,K)$ modulo $\widetilde{vK}$.

\pars
Assume now that $K(a)|K$ is not purely inseparable. Take $E$ to be a maximal separable subextension
of $K(a)|K$; we have that $E|K$ is nontrivial. Furthermore, $E|K$ is a tower of Galois extensions of degree $p$, as
$K(a)|K$ is a tower of normal extensions of degree $p$. This shows that $K$ admits an Artin-Schreier extension
$K(\vartheta)\subseteq K(a)$, where $\vartheta $ is an Artin-Schreier generator. Since $K(a)|K$ is an immediate
extension of henselian fields, the same holds for $K(\vartheta)|K$ and thus $K(\vartheta )|K$ is an Artin-Schreier
defect extension. Take a polynomial $f\in K[X]$ such that $\vartheta=f(a)$ with $\deg f<p^i$.
Since $a$ is strongly immediate by assumption, we can apply Lemma~\ref{Kap} to obtain that
\begin{eqnarray}		\label{dist_as_def}
\dist(\vartheta ,K)\>=\> \dist(f(a),K)\>=\> \alpha + p^s\dist(a,K)
\end{eqnarray}
for some $\alpha \in vK$ and $s<i$. Take $c\in K$ such that $vc=\alpha$.

Assume that the Artin-Schreier defect extension $(K(\vartheta)|K,v)$ is dependent. Then $\dist(\vartheta,K)=
\dist(\eta,K)$ for some $\eta \in K^{1/p}$ such that the extension $K(\eta)|K$ is immediate. Hence,
\[
p^s\dist(a,K)\>=\> \dist(\vartheta ,K)-vc\>=\>\dist(\eta,K)-vc \>=\>
\dist\left(\frac{\eta}{c},K\right)\>,
\]
where the last equation holds by (\ref{shiftcut}).
Since $\frac{1}{p^s}v(\frac{\eta}{c}-K)= v\left((\frac{\eta}{c})^{1/p^s}-K^{1/p^s}\right)$, we obtain that
\begin{eqnarray}		\label{dist_dep_AS}
\dist(a,K)\>=\> \dist\left(\left(\frac{\eta}{c}\right)^{1/p^s},\,K^{1/p^s}\right)\>.
\end{eqnarray}
Since $v(a-K)$ has no maximal element, it follows from equation (\ref{dist_dep_AS}) that also
$v\left((\frac{\eta}{c})^{1/p^s}-K^{1/p^s}\right)$ has no maximal element, so $(\frac{\eta}{c})^{1/p^s}$ is weakly
immediate over $K^{1/p^s}$. Moreover, $K^{1/p^s}((\frac{\eta}{c})^{1/p^s})|K^{1/p^s}$ is a purely inseparable
extension of degree $p$. Hence, $\dist\left((\frac{\eta}{c})^{1/p^s},K^{1/p^s}\right)$ has already been counted
under the purely inseparable case in this or an earlier induction step (depending on the value of $s<i$).

\pars
Assume now that $K(\vartheta)|K$ is an independent Artin-Schreier defect extension. Then
\cite[Proposition~4.2]{[Ku6]} together with Equation~(\ref{dist_as_def}) shows that
\[
p^s\dist(\vartheta ,K)\>=\>\dist(\vartheta ,K)\>=\>vc+ p^s\dist(a,K)
\]
and consequently,
\[
\dist(a,K)=-\frac{1}{p^s}vc+ \dist(\vartheta, K).
\]
This shows that $\dist(a,K)$ is equal modulo $\widetilde{vK}$ to the distance of some weakly immediate element of
degree $p$ over $K$, which has already been counted in $\ndd^*_1(K,v)$.

Consequently, we obtain that
\[
\ndd^*_i(K,v)\>\leq\> \ndd^*_{i-1}(K,v) + m.
\]
By induction hypothesis, it follows that
\[
\ndd^*_i(K,v)\>\leq\> r+mi.
\]
\end{proof}

\parm
An interesting special case is covered by the following result. Here the assumptions on the finiteness of the
extension $(K^{1/p}|K,v)$ and its defect are not needed.
\begin{proposition}                          \label{r+1}
Assume that $(K,v)$ has finite rank $r$ and that the perfect hull of $K$ is contained in the
completion of $(K,v)$. Then
\[
\ndd^*_i(K,v) \leq r+1
\]
for every natural number $i$. Therefore, there are at most $r+1$ distances distinct modulo $\widetilde{vK}$
of elements satisfying (\ref{imm_elem}) for arbitrary $i\in\N$.
\end{proposition}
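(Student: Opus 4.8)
The plan is to re-run the reduction and induction of the proof of Theorem~\ref{ndd_i}, but to exploit the completion hypothesis in order to collapse \emph{all} distances arising from purely inseparable (and dependent Artin--Schreier) behaviour into a single one, so that the role of the parameter $m$ is played by one fixed cut. Write $\hat K$ for the completion of $(K,v)$ and $K^{1/p^\infty}=\bigcup_n K^{1/p^n}$ for the perfect hull. The key observation I would isolate as a claim: if $a\in\tilde K\setminus K$ is purely inseparable over $K$ and weakly immediate over $K$, then $\dist(a,K)$ is the maximal cut of $\widetilde{vK}$, i.e.\ the one whose lower cut set is all of $\widetilde{vK}$. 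Indeed such an $a$ lies in $K^{1/p^\infty}\subseteq\hat K$ by hypothesis, hence is a limit of a Cauchy sequence from $K$; therefore $v(a-K)$ is cofinal in $vK$ and so in $\widetilde{vK}$, forcing the induced cut to be maximal. In particular all such elements share one and the same distance, which I denote by $\delta_\infty$.

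First I would verify that the hypothesis is inherited under the two reductions used in Theorem~\ref{ndd_i}. Since $K^h|K$ and $K^r|K$ are separable algebraic, a separable algebraic extension of the perfect field $K^{1/p^\infty}$ is perfect, so the perfect hulls are $(K^h)^{1/p^\infty}=K^{1/p^\infty}\cdot K^h$ and $(K^r)^{1/p^\infty}=K^{1/p^\infty}\cdot K^r$. As $\hat K$ embeds value-preservingly into $\widehat{K^h}$ and $\widehat{K^r}$, while $K^h$ and $K^r$ lie in their own completions, I get $(K^h)^{1/p^\infty}\subseteq\widehat{K^h}$ and $(K^r)^{1/p^\infty}\subseteq\widehat{K^r}$. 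Thus, exactly as in Theorem~\ref{ndd_i}, I may assume $(K,v)$ henselian with $K=K^r$; by Lemma~\ref{overr} every $K(a)|K$ with $a$ as in (\ref{imm_elem}) is then a tower of normal extensions of degree $p$, and both the rank $r$ and the completion hypothesis are preserved.

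For the base case $i=1$ I would re-derive $\ndd^*_1(K,v)\le r+1$ directly, since Theorem~\ref{r+m} is unavailable once finiteness of $[K^{1/p}:K]$ is dropped. After the reduction a weakly immediate uv--extension of degree $p$ is a normal immediate degree-$p$ extension, hence either purely inseparable or an Artin--Schreier defect extension. Purely inseparable generators and the generators of dependent Artin--Schreier extensions all have distance $\delta_\infty$ by the claim, contributing a single distance; the independent Artin--Schreier extensions contribute lower edges of the proper convex subgroups of $vK$, of which there are at most $r$. Hence $\ndd^*_1(K,v)\le r+1$.

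For the inductive step I would follow the case analysis of Theorem~\ref{ndd_i} for an $a$ with $[K(a):K]=p^i$ that is strongly immediate (the non-strongly-immediate case lowers the degree via Lemma~\ref{wisi} and is absorbed into $\ndd^*_{i-1}$). If $K(a)|K$ is purely inseparable, then $a\in K^{1/p^i}\subseteq\hat K$ and $\dist(a,K)=\delta_\infty$. Otherwise $K(a)$ contains an Artin--Schreier defect extension $K(\vartheta)$ with $\dist(\vartheta,K)=\alpha+p^s\dist(a,K)$ as in (\ref{dist_as_def}); if it is dependent, then $\dist(\vartheta,K)=\delta_\infty$, and since translation by $\alpha\in vK$ and the order-automorphism given by multiplication by $p^s$ on $\widetilde{vK}$ both fix the maximal cut, $\dist(a,K)=\delta_\infty$ as well; if it is independent, then exactly as in Theorem~\ref{ndd_i} the distance $\dist(a,K)$ agrees modulo $\widetilde{vK}$ with $\dist(\vartheta,K)$, a distance already counted in $\ndd^*_1(K,v)\le\ndd^*_{i-1}(K,v)$. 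In every case the new distance already occurs among those counted in $\ndd^*_{i-1}$, whence $\ndd^*_i(K,v)=\ndd^*_{i-1}(K,v)\le r+1$. The main obstacle is not this bookkeeping but the conceptual heart, namely establishing the claim that the completion hypothesis forces the maximal distance, together with its stability under the henselization and ramification-field reductions; this is precisely what lets the infinitely many potential purely inseparable distances collapse to the single cut $\delta_\infty$ without any finite $p$-degree assumption.
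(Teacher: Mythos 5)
Your proposal is correct and follows essentially the same route as the paper: the paper's own proof is precisely to re-run the argument of Theorem~\ref{ndd_i}, observing that under the completion hypothesis every purely inseparable weakly immediate element has distance $\infty$ (the maximal cut of $\widetilde{vK}$), so that the role of $m$ collapses to a single distance and the bound becomes $r+1$. The only local difference is that where you argue dependent Artin--Schreier defect extensions also yield the distance $\infty$, the paper instead shows (via Corollary~2.30 of \cite{[Ku3]}) that such extensions cannot exist at all under this hypothesis; both suffice, and your explicit verification of the base case $i=1$ and of the stability of the hypothesis under the henselization and ramification-field reductions merely fills in details the paper leaves implicit.
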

\begin{proof}
Similar to the proof of Theorem~\ref{ndd_i}, except that in all purely inseparable cases the only possible distance
is $\infty$. In particular, there are no dependent Artin-Schreier defect extensions. Indeed, if $\vartheta$ is an
Artin-Schreier generator of an Artin-Schreier defect extension, then \cite[Corollary~2.30]{[Ku3]} yields that
$v(\eta-c)<0$ for all $c\in K$. Hence there is no $\eta\in K^{1/p}$ such that $v(\eta-c)=v(\vartheta-c)$ for all
$c\in K$.
\end{proof}

We can generalize the previous proposition by dropping the condition that for each considered algebraic element $a$,
$(K(a)|K,v)$ is a uv--extension. If $H$ is a proper convex subgroup of $\widetilde{vK}$, then $H^+$ denotes the cut
at the upper edge of $H$, that is, its upper cut set is the largest final segment of $\widetilde{vK}$ which does not
meet $H$.
\begin{corollary}
Under the assumptions of Proposition~\ref{r+1}, there are at most $2r$ distances
distinct modulo $\widetilde{vK}$ of elements in $\tilde{K}$ that are weakly immediate over $K$.
\end{corollary}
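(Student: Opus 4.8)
The final corollary asks to bound, by $2r$, the number of distances distinct modulo $\widetilde{vK}$ of \emph{all} elements $a\in\tilde K$ weakly immediate over $K$ — dropping the uv-extension hypothesis that was present throughout Section~\ref{sec3.3}. The key given facts are Proposition~\ref{r+1} (under the same hypotheses: perfect hull of $K$ inside the completion, finite rank $r$), which bounds by $r+1$ the distances of uv-extension elements satisfying (\ref{imm_elem}) for \emph{every} $i$, and the structural Lemma stating that distances of elements in Artin-Schreier defect extensions are among the lower edges of convex subgroups of $vK$ together with distances of elements in $K^{1/p}$.

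**The plan.** The natural strategy is to reduce the general weakly immediate element to the uv case, at the cost of the extra factor. Given $a\in\tilde K$ weakly immediate over $K$, the valuation $v$ need not extend uniquely to $K(a)$, so the uv-hypothesis fails; but I would first pass to the henselization $K^h$. Since $K^h|K$ is immediate, $\widetilde{vK}=\widetilde{vK^h}$, and by Lemma~\ref{dKdKh} (or the weak-immediacy arguments) $a$ remains weakly immediate over $K^h$ with $\dist(a,K^h)=\dist(a,K)$. Over the henselian field $K^h$ every algebraic extension is automatically a uv-extension, so the uv-obstruction disappears. The hypotheses of Proposition~\ref{r+1} are preserved under henselization, exactly as verified in the proof of Theorem~\ref{ndd_i}: $K^h$ has the same $p$-degree, same rank $r$, and the perfect hull of $K^h$ sits inside the completion of $K^h$.

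**Splitting off the edge-of-subgroup distances.** The factor of $2$ rather than a single $+1$ should come from the independent Artin-Schreier / edge-of-convex-subgroup phenomenon. I would invoke the summarizing Lemma: the distance of a weakly immediate element either (i) coincides modulo $\widetilde{vK}$ with a distance arising in a uv-extension satisfying (\ref{imm_elem}) — counted by Proposition~\ref{r+1} as at most $r+1$, but under the present hypotheses the purely inseparable distances all collapse to $\infty$, so effectively only $r+1$ matter — or (ii) is a lower edge of a proper convex subgroup $H$ of $\widetilde{vK}$. A finite-rank group has exactly $r$ proper convex subgroups, giving at most $r$ lower-edge cuts. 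Combining the two disjoint contributions and using that $\infty$ is the single inseparable distance, I expect the count $r$ (edges) plus $r$ (the remaining genuinely-immediate/Artin-Schreier type distances, where the $+1$ from Proposition~\ref{r+1} is absorbed by the trivial $\infty$ contribution) to yield the bound $2r$.

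**The main obstacle.** The delicate point is \textbf{not} the henselization reduction but the bookkeeping that turns $r+1$ into exactly $2r$: I must confirm that the two families of distances counted — lower edges of the $r$ proper convex subgroups, versus the distances reachable from genuine immediate uv-extensions — can be combined without double-counting and without losing the factors, and that dropping the uv-hypothesis truly introduces no \emph{new} distance classes beyond those already described by the summarizing Lemma. Concretely I would argue that any weakly immediate $a$ over the henselian $K$ generates, inside $K(a)$, either an immediate subextension whose distance is covered by Proposition~\ref{r+1}, or an independent-type situation whose distance is a convex-subgroup edge; the challenge is verifying that the non-uv behavior over the \emph{non}-henselian base is entirely accounted for once we ascend to $K^h$, and that each of the at most $r$ edge-distances is genuinely realizable and distinct modulo $\widetilde{vK}$, so the two contributions together do not exceed $2r$.
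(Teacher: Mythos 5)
There is a genuine gap, and it sits exactly at the point you yourself flag as ``not the main obstacle'': the henselization reduction. You claim that ``by Lemma~\ref{dKdKh} (or the weak-immediacy arguments) $a$ remains weakly immediate over $K^h$ with $\dist(a,K^h)=\dist(a,K)$.'' But Lemma~\ref{dKdKh} is proved under the hypothesis that $(L|K,v)$ is a uv--extension (it goes through Theorem~\ref{apprMT2} and Lemma~\ref{uvdisj}, i.e.\ linear disjointness of $K(a)|K$ from $K^h|K$), and the uv--hypothesis is precisely what this corollary drops. When $K(a)|K$ is \emph{not} linearly disjoint from $K^h|K$, the equality $\dist(a,K^h)=\dist(a,K)$ can fail; by Lemma~\ref{distdist} one then only gets the dichotomy: either $\dist(a,K)=\dist(a,K^h)$, or $\dist(a,K)=\dist(d,K)$ for some $d\in K^h\setminus K$. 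Your argument silently assumes the first alternative always holds, so the entire second family of distances --- those of henselization elements over $K$ --- is unaccounted for. The paper's proof handles exactly this case by citing \cite[Theorem~1]{[Ku4]}: such an $a$ is weakly distinguished over $K$, so $\dist(a,K)=\alpha+H^+$ for some $\alpha\in vK$ and a \emph{nontrivial} convex subgroup $H$ of $vK$, i.e.\ an \emph{upper} edge $H^+$, shifted by an element of $vK$. Excluding $H=vK$ (which gives $\infty$, already counted) this contributes $r-1$ further distances, and $(r+1)+(r-1)=2r$.

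Your substitute for this second case --- the summarizing lemma about independent Artin--Schreier defect extensions and \emph{lower} edges of convex subgroups --- is the wrong tool: those extensions are uv--extensions of degree $p$, so their distances are already inside the count of Proposition~\ref{r+1}, and the lemma says nothing about elements for which $v$ does not extend uniquely to $K(a)$. Moreover your bookkeeping does not actually produce $2r$: you propose $r$ lower-edge cuts plus the $r+1$ of Proposition~\ref{r+1}, with the overlap being $\infty$; but $\infty$ is not the lower edge of any proper convex subgroup (a lower edge always has nonempty upper cut set), so your two families would sum to $2r+1$, not $2r$. The overlap that makes the arithmetic close is between $\infty$ and the upper edge $H^+$ with $H=vK$ --- which only becomes visible once the second case is treated via the weakly distinguished classification.
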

\begin{proof}
Assume that $a$ is weakly immediate over $K$. Then $\dist(a ,K)=\dist(a,K^h)$ or $\dist(a,K)=\dist(d,K)$ for some
$d\in K^h$.

In the first case, we obtain that $a$ is weakly immediate over $K^h$. Hence $a$ satisfies conditions
(\ref{imm_elem}) for some $i\in\N$ with $K^h$ in place of $K$. Now if $(K,v)$ satisfies the assumptions of
Proposition~\ref{r+1}, then so does its henselization: first of all, they have the same rank, and secondly,
$(K^h)^{1/p^\infty}=K^h.K^{1/p^\infty}\subseteq K^h.K^c\subseteq (K^h)^c$. Applying Proposition~\ref{r+1}, we see
that the number of distances distinct modulo $\widetilde{vK}$ of such elements $a$ is bounded by $r+1$.

In the second case, $a$ is weakly distinguished over $K$, that is,
\[
\dist(a,K) = \alpha +H^+
\]
for some $\alpha \in vK$ and a nontrivial convex subgroup $H$ of $vK$ by \cite[Theorem~1]{[Ku4]}. Note that if
$H=vK$, we have that $\dist(a,K) = \infty$ and this distance has already been counted above.  This gives $r-1$
additional possible distances modulo $\widetilde{vK}$.

Hence we have at most $(r+1)+(r-1)=2r$ distances distinct modulo
$\widetilde{vK}$ of weakly immediate algebraic elements over $K$.
\end{proof}

\bn

\end{document}